\numberwithin{equation}{section}
\newcommand{\N}{\mathbb{N}}
\newcommand{\Bc}{\mathcal{B}}
\newcommand{\Gc}{\mathcal{G}}
\newcommand{\Nc}{\mathcal{N}}
\DeclareMathOperator{\HVG}{HVG}
\DeclareMathOperator{\nest}{nest}
\DeclareMathOperator{\pnt}{\raise 0.5mm \hbox{\large\bf.}}
\newcommand\thankssymb[1]{\textsuperscript{\@{*}}}
\newcommand{\Bs}{\widetilde{\mathcal{B}}}
\let\phi=\varphi
\newtheorem{theorem}{Theorem}[section]
\newtheorem{lem}[theorem]{Lemma}
\newtheorem{prop}[theorem]{Proposition}
\newtheorem{cor}[theorem]{Corollary}
\newtheorem{lem-def}[theorem]{Lemma and Definition}
\newtheorem{prop-def}[theorem]{Proposition and Definition}
\newtheorem{defi}[theorem]{Definition} 
\newtheorem{rem}[theorem]{Remark}
\newtheorem{rem-def}[theorem]{Remark and Definition}
\newtheorem{exa}[theorem]{Example}
\newtheorem{question}[theorem]{Question}
\begin{document}
\title{Counting Horizontal Visibility Graphs}

\author{Martina Juhnke-Kubitzke}
\address{Universit\"at Osnabr\"uck, Institut f\"ur Mathematik, 49069 Osnabr\"uck, Germany}
\email{juhnke-kubitzke@uos.de}

\author{Daniel K\"ohne} 
\address{Universit\"at Osnabr\"uck, Institut f\"ur Mathematik, 49069 Osnabr\"uck, Germany}
\email{dakoehne@uos.de}

\author{Jonas Schmidt}
\address{Universit\"at Osnabr\"uck, Institut f\"ur Cognitive Science, 49069 Osnabr\"uck, Germany}
\email{jonschmidt@uos.de}

\begin{abstract}
Horizontal visibility graphs (HVGs, for short) are a common tool used in the analysis and classification of time series with applications in many scientific fields. In this article, extending previous work by Lacasa and Luque, we prove that HVGs associated to data sequences without equal entries are completely determined by their ordered degree sequence. Moreover, we show that HVGs for data sequences without and with equal entries are counted by the Catalan numbers and the large Schr\"oder numbers, respectively. 
\end{abstract}

\maketitle


\tableofcontents

\section{Introduction}
\label{sect_intro}
Given a data sequence or a time series it is common to associate a so-called \emph{horizontal visibility graph} to it which can be used for its classification and analysis. In particular, the degree distribution of an HVG is known to be a good measure for distinguishing stochastic from chaotic systems \cite{luque2009horizontal}. HVGs have found applications in many different areas. Besides physics, where they are employed in optics \cite{aragoneses2016}, plasma physics \cite{acosta2021applying} (in a directed version), fluid dynamics \cite{Manshour_2015} or the fault diagnosis of rolling bearings \cite{gao2020fault}, their usage ranges from finance \cite{rong2018topological} to the EEG analysis of epileptics in physiology \cite{physiology_donges2013}, to the identification of alcoholic patients in neuroscience \cite{zhu2014analysis}. In many of those applications, simple metrics such as the vertex degree sequence, the graph entropy and moments have shown to be particularly helpful indicators for the classification of the considered data sequences. It is hence natural to ask whether such properties already determine an HVG. In the case of vertex degree sequences this question is known to have partial answers. In Luque and Lacasa \cite{luque2017canonical} provided an affirmative answer for the class of canonical HVGs by providing an explicit bijection to the set of possible ordered degree sequences. Here, an HVG is called \emph{canonical} if the underlying time series has pairwise distinct entries, and the first and last value are the largest ones. Whereas the first condition is met by most (even discrete) real-world time series with a sufficiently high resolution, the second condition is a huge restriction since it is very unlikely to be satisfied in applications. Another positive result in this direction was provided by O'Pella in \cite{opella2019horizontal} showing that any HVG (without additional requirements) can be recovered from its directed vertex degree sequence and providing an explicit algorithm for this aim. It is essential that the degree sequence is directed since for arbitrary degree sequences it is easy to construct examples where the statement does not hold (see \Cref{figure: exam_same_degree}). However, in all those examples, it turns out that the underlying data sequences have equal entries. Indeed, we prove the following statement:

\begin{theorem}\label{thm: degree sequence determines HVG}
Let $N\in \mathbb{N}$. If $G$ and $H$ are different HVGs on $N$ vertices corresponding to data sequences with pairwise distinct entries, then their ordered vertex degree sequences $\Delta(G)$ and $\Delta(H)$ are different. In particular, HVGs from data sequences without equal entries are uniquely determined by their ordered vertex degree sequence. 
\end{theorem}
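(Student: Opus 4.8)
The plan is to prove the equivalent statement that a distinct‑entry HVG $G$ on $N$ vertices is reconstructible from its ordered degree sequence $\Delta(G)=(d_1,\dots ,d_N)$, by strong induction on $N$ (the cases $N\le 2$ being trivial). Since an HVG depends only on the relative order of its entries, we may take the data sequence to be a permutation of $[N]$; write $m$ for the position of the maximal entry. The structural input is the ``cut at the maximum'': the maximal entry blocks every horizontal sightline across position $m$, so deleting vertex $m$ splits $G$ into the disjoint union of the HVG $G_L$ of $(x_1,\dots ,x_{m-1})$ on $\{1,\dots ,m-1\}$ and the HVG $G_R$ of $(x_{m+1},\dots ,x_N)$ on $\{m+1,\dots ,N\}$, and the neighbours of $m$ are precisely the right‑to‑left record positions of the left block together with the left‑to‑right record positions of the right block. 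Distinctness is used here in two ways: it makes $m$ unique, and it forces these record positions to form an \emph{induced} path inside $G_L$ (resp.\ $G_R$), running from the block's end vertex to the block's maximum — so once $G_L$ and $G_R$ are known, the neighbourhood of $m$ is determined. Hence $G$ is recovered from the triple $(m,G_L,G_R)$, and by induction $G_L,G_R$ are recovered from their ordered degree sequences; it remains to extract from $\Delta(G)$ an admissible $m$ together with the sequences $\Delta(G_L),\Delta(G_R)$.

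Two cheap reductions dispose of part of the work. If $d_1=1$, vertex $1$ is a leaf on vertex $2$, so $G-1$ is the HVG of $(x_2,\dots ,x_N)$ with ordered degree sequence $(d_2-1,d_3,\dots ,d_N)$, and we finish by induction after re‑attaching the leaf; symmetrically if $d_N=1$. In the remaining case $d_1,d_N\ge 2$ one must genuinely locate the cut. The usable facts are that every HVG contains the spanning path $1-2-\cdots -N$, that every interior vertex has degree $\ge 2$, and the fine structure of the record paths above; from these one argues that $\Delta(G)$ determines which interior vertex can serve as the global maximum and which of its neighbours lie in the left block versus the right block, so that removing these ``bridge'' edges yields the correct $\Delta(G_L)$ and $\Delta(G_R)$ and the induction closes.

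The main obstacle is precisely this identification, and it is subtle because $\Delta(G)$ does \emph{not} literally determine $m$: a given HVG arises from many permutations with the maximum in different places (already the path $1-2-\cdots -N$ does), so no canonical $m$ can be read off the integers $d_1,\dots ,d_N$. The resolution is to show that whatever admissible triple $(m,\Delta(G_L),\Delta(G_R))$ one selects, the reconstruction returns the same graph — i.e.\ the recursion is well defined at the level of degree sequences even though the cut is not intrinsic to the underlying data — and to verify that the integer sequences produced really are the degree sequences of $G_L$ and $G_R$. Carrying out this bookkeeping is the technical heart of the proof. One could instead try to reduce to O'Pella's theorem \cite{opella2019horizontal}, which reconstructs any HVG from its \emph{directed} ordered degree sequence $\big((d_i^-,d_i^+)\big)_{i=1}^N$: it would then suffice to show that, for distinct entries, the undirected sequence already forces the split $d_i=d_i^-+d_i^+$ at every $i$; but pinning down that split meets the same difficulty, so I do not expect it to be a materially shorter route.
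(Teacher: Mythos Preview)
Your approach cuts at the position $m$ of the maximum and recurses on the two halves $G_L,G_R$. You correctly observe that $m$ is not determined by $\Delta(G)$, but the resolution you sketch is circular. To apply the induction hypothesis to the left half you need $\Delta(G_L)$; to compute $\Delta(G_L)$ from $(d_1,\dots ,d_{m-1})$ you must subtract $1$ at exactly those positions $i<m$ that are neighbours of $m$ in $G$; and by your own description these positions are the right-to-left records of $(x_1,\dots ,x_{m-1})$, which are determined by $G_L$ --- the very object you are trying to recover. Declaring that ``whatever admissible triple one selects, the reconstruction returns the same graph'' does not break this loop: different admissible choices of the neighbour set of $m$ on the left produce genuinely different candidate sequences for $\Delta(G_L)$, and showing that all of them rebuild to the same $G$ is at least as hard as the theorem itself. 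Even the left/right split of $d_m$ is not visible in $\Delta(G)$. (Your reduction to O'Pella's directed result runs into the identical obstruction, as you note.)

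The paper's proof avoids this entirely by \emph{removing} a single vertex rather than cutting at one. The key lemma (\Cref{lem: first 2}) is that in any $G\in\Gc_{N,\neq}\setminus\{P_N\}$ one can locate, purely from $\Delta(G)$, an interior vertex $i$ with $\delta_i=2$ and $(i-1)(i+1)\in E(G)$: take $i=2$ if $\delta_2=2$ and $\delta_1\neq 1$, and otherwise take the minimal $i\ge 3$ with $\delta_i=2$ and $\delta_{i-1}\ge 3$. Deleting this $i$ yields a graph in $\Gc_{N-1,\neq}$ whose degree sequence is $(\delta_1,\dots ,\delta_{i-2},\delta_{i-1}-1,\delta_{i+1}-1,\delta_{i+2},\dots ,\delta_N)$, computable from $\Delta(G)$ with no ambiguity, and induction finishes immediately. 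The point is that a degree-$2$ interior vertex has exactly the neighbours $i-1$ and $i+1$, so there is no hidden neighbour set to resolve; all the work goes into proving that the rule above really does pick an $i$ with $(i-1)(i+1)\in E(G)$, which is where distinctness of the data is used.
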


This theorem improves the results from \cite{luque2017canonical} and \cite{opella2019horizontal} by weakening the assumptions on the HVG and needing less information to guarantee uniqueness, respectively.
Moreover, we provide an explicit algorithm to reconstruct an HVG from its ordered vertex degree sequence (see \Cref{rem: degree algo}). 

In the second part of this paper, we take a similar viewpoint as in \cite{Mansour} where HVGs are studied from a purely combinatorial perspective and connections with several combinatorial statistics are established. More precisely, we are interested in the number of HVGs on a fixed number of vertices  corresponding to data sequences without and with equal entries, where we consider two HVGs equal if they are equal as labeled graphs. Miraculously, Catalan numbers and large Schr\"oder numbers determine those cardinalities. More precisely, we show the following:

\begin{theorem}\label{thm: Catalan numbers}
Let $N\in \mathbb{N}$ and let $\Gc_{N,\neq}$ and $\Gc_{N}$ be the set of HVGs on $N$ vertices corresponding to data sequences without and with equal entries, respectively. Then:
\begin{enumerate}
\item[(i)] $|\Gc_{N,\neq}|=C_{N-1}$, where $C_{N-1}=\frac{1}{N}\binom{2N-2}{N-1}$ denotes the \emph{$(N-1)$-st Catalan number}.
\item[(ii)] For $N\geq 2$, one has $|\Gc_{N}|=r_{N-2} $, where $r_N$ denotes the \emph{$N$-th large Schr\"oder number}.
\end{enumerate}
\end{theorem}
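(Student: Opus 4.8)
The plan is to establish both parts by finding a recursive decomposition of HVGs that matches the standard recurrences for Catalan numbers ($C_n = \sum_{k=0}^{n-1} C_k C_{n-1-k}$) and for large Schröder numbers. Let me first think about the right normal form for a data sequence. Given a data sequence $a_1, \dots, a_N$ (with or without equal entries), the associated HVG has an edge between positions $i < j$ precisely when $a_i, a_j > a_k$ for all $i < k < j$. The key structural observation to exploit is: consider the position $m$ of a global maximum. Then $m$ "sees" everything to its left and right that it can reach, and crucially the HVG splits at $m$ — no vertex to the left of $m$ is adjacent to a vertex to the right of $m$ other than through the single edge the maximum may contribute. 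More precisely, vertex $m$ is adjacent to the nearest vertex on each side, and the subgraphs induced on $\{1,\dots,m\}$ and $\{m,\dots,N\}$ are themselves HVGs (with $m$ as a shared endpoint which is a "boundary maximum" of each piece). This is exactly the recursive skeleton of Catalan/Schröder structures.

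For part (i), I would argue that the combinatorial type of an HVG on $N$ vertices from a sequence with distinct entries is determined by, and freely built from, the choice of which entry is the maximum together with the HVG types of the left and right blocks. Since entries are distinct, the maximum is unique; if it sits at position $m$, the left block is an HVG on $m$ vertices whose maximum is at its right endpoint, and the right block is an HVG on $N-m+1$ vertices whose maximum is at its left endpoint. The subtlety is bookkeeping: HVGs with a boundary maximum on $k$ vertices should themselves be counted by $C_{k-1}$, and splicing two such at a shared vertex with a new overall maximum inserted needs to reproduce $C_{N-1}$. I would set $f(N) = |\Gc_{N,\neq}|$ and show $f(N) = \sum f(\text{left}) \cdot f(\text{right})$ over the split, checking the index arithmetic so that it becomes the Catalan recurrence; the base case $f(1) = 1 = C_0$ is immediate. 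Alternatively — and this may be cleaner — I would produce an explicit bijection between $\Gc_{N,\neq}$ and a known Catalan family, e.g. Dyck paths of length $2(N-1)$ or binary trees on $N-1$ internal nodes, reading off the path from the profile of the sequence (each step up/down recording whether $a_{i+1} > a_i$), and verify that two distinct-entry sequences give the same HVG iff they give the same up-down pattern, i.e.\ that the HVG depends only on the pattern of ascents and descents. This last equivalence is plausible since horizontal visibility only depends on order relations among consecutive "local" comparisons, and it would simultaneously reprove Theorem 1.2 in this counting context.

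For part (ii), the presence of equal entries means the global maximum need not be unique, and a plateau of maximal values introduces extra freedom — this is precisely where the "large step" of Schröder paths enters (large Schröder numbers count paths with up, down, and level steps, or equivalently Dyck-type paths weighted by $2$). I would extend the bijective/recursive framework: now at the top level one records not just an up or down step but also the possibility of a repeated maximal value (a level step), and the recursion for $g(N) = |\Gc_N|$ should acquire the extra term that turns the Catalan recurrence into the Schröder recurrence $r_n = r_{n-1} + \sum_{k=0}^{n-1} r_k r_{n-1-k}$ (or one of its equivalent forms). The index shift to $r_{N-2}$ (valid for $N \ge 2$) must be pinned down carefully — presumably because a sequence on $2$ vertices, $(a_1, a_2)$, always yields the single edge $K_2$ regardless of whether $a_1 = a_2$, giving $|\Gc_2| = 1 = r_0$, and the recursion then propagates. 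The main obstacle will be part (ii): correctly identifying which repetitions among the data sequence actually change the HVG (only "plateaus" of consecutive equal entries matter for visibility, and only in specific local configurations), so that the count of genuinely distinct labeled HVGs — not of sequences — comes out to the Schröder number rather than something larger; getting this equivalence relation on sequences exactly right, and matching it to the level-step structure, is the crux.
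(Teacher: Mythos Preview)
Your recursive idea for part (i) has a real gap: the position of the global maximum is \emph{not} determined by the HVG, so splitting at it is not a well-defined operation on graphs. Already for $N=2$ the sequences $(1,2)$ and $(2,1)$ give the same HVG with the maximum in different positions; more generally, any non-nested vertex of $G$ can serve as the max in some realization, so the map $(m,L,R)\mapsto G$ is many-to-one and the sum $\sum_m f(\text{left})f(\text{right})$ overcounts. The paper repairs this by decomposing not at the max but at $m_G(1)$, the largest neighbour of vertex $1$---a graph-intrinsic quantity---and then proving $|\Gc^s_{s,\neq}|=|\Gc_{s-1,\neq}|$ to close the recursion.

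Your alternative bijection via the ascent/descent pattern is also wrong. The sequences $(3,2,1,4)$ and $(4,3,1,2)$ both have pattern $(\text{down},\text{down},\text{up})$, yet the first yields the edge $14$ and the second does not. More bluntly, there are $2^{N-1}$ ascent/descent patterns but $C_{N-1}$ HVGs, so no such bijection can exist. The HVG genuinely depends on more than local comparisons $a_i\lessgtr a_{i+1}$; what matters is the nesting structure, and the paper's bijection to balanced parentheses is built from the set $\Nc(G)$ of non-nested vertices, recursing into the intervals between them.

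For part (ii), your sketch does not identify the mechanism that produces the factor of $2$ relating large and little Schr\"oder numbers. The paper's key observation is that the edge $1N$ can be freely toggled: $G\mapsto G\setminus\{1N\}$ is a bijection $\Gc_N^N\to\Gc_N\setminus\Gc_N^N$, so $|\Gc_N|=2|\Gc_N^N|$. One then bijects HVGs \emph{without} the edge $1N$ to bracketings of a string of $N-1$ letters (counted by $s_{N-2}$), again using the non-nested vertices to locate the outermost bracket pairs. Your ``level step for plateaus'' heuristic is suggestive but does not cleanly separate which repetitions in the data change the HVG; the paper sidesteps this entirely by working with the graphs directly.
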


For \Cref{thm: Catalan numbers} (i) we provide two different proofs: one purely algebraic and one via a bijection to the set of balanced parantheses of length $N-1$. To show \Cref{thm: Catalan numbers} (ii), the main step is to construct a bijection from HVGs on $N$ vertices not containing the edge $1N$ to bracketings of a string of $N-1$ identical letters, which are known to be counted by the $(N-2)$-nd little Schr\"oder number.

The paper is structured as follows. Section 2 provides relevant background on graphs and, in particular  , HVGs, and proves some basic but useful properties of these. In Section 3 we focus on HVGs corresponding to data sequences with pairwise distinct entries. After providing a specific data sequence that realizes a given HVG (see \Cref{thm:dataGraph}) we prove \Cref{thm: degree sequence determines HVG}. In the last two parts of this section, we provide the two different proofs of \Cref{thm: Catalan numbers} (i). In Section 4, we consider arbitrary HVGs. After proving the analogous result to \Cref{thm:dataGraph} (see \Cref{thm: sequence arbitrary data}) in this setting, we turn to the proof of \Cref{thm: Catalan numbers} (ii). We close this article with some open problems and hints to future work in Section 5. 

\section{Preliminaries}
\label{sec: prelim}
In this section, we provide basic background concerning graphs, horizontal visibility graphs and prove some easy properties of the latter that will be useful. For more details on graphs we refer to \cite{Diestel2018Graphs} and for those related to HVGs to \cite{luque2009horizontal}, \cite{Mansour}.

\subsection{Graphs}\label{sect:prel1}
We start by fixing some notation. For $M,N\in\N$ with $M\leq N$, let $[M,N]=\{M,M+1,\ldots,N\}$ and $[N]=\{1,\ldots,N\}$. Given  $G=(V(G),E(G))$ we often write $V$ and $E$ instead of $V(G)$ and $E(G)$, respectively, if it is clear from the context which graph we are referring to. For $v, w\in V(G)$, we use the shorthand notation $vw$ for $\{v, w\}$. If $vw\in E(G)$, $v$ and $w$ are called \emph{neighbors} and the set of all neighbors of $v$ is denoted by $N(v)$. If $V(G)=[N]$, which will be almost always the case, we denote by $m_G(i)$ the \emph{maximal} neighbor of vertex $i$ in $G$, i.e., 
\begin{equation*}
 m_G(i)=\max\{1\leq \ell\leq N~:~i\ell\in E(G)\}.
\end{equation*} 
If $V(G)=[N]$, the sequence $\Delta(G)=(\delta_1,\ldots,\delta_N)$, where $\delta_i=|N(i)|$, is called the  \emph{(ordered) degree sequence} of $G$. We denote by $G\cup e$ and $G\setminus e$ the graph obtained from $G$ by adding and removing an edge $e$, respectively, i.e., $G\cup e= (V,E\cup \{e\})$ and $G\setminus e= (V,E\setminus\{ e\})$. We define deleting a vertex $v$ as $G\setminus \{v\}=(V\setminus \{v\},E\setminus \{vw\ :\ w\in N(v)\})$.  Given $W\subseteq V$, the subgraph \emph{induced} by $W$ is the graph $G_W = (W,\{uv\in E~:~u, v\in W\})$. 

 A graph $G$ on vertex set $[N]$ is called \emph{non-crossing} if there are no vertices $i<j<k<\ell$ with $\{(i,k),(j,\ell)\}\subseteq E(G)$. Intuitively, this means that one can draw the vertices $1,\ldots, N$ on a horizontal line such that all edges are on or above this line and there is no pair of edges that cross. Similarly, a vertex $i\in [N]$ is called \emph{nested} if there exist $1\leq j<i<k\leq N$ such that $jk\in E(G)$. Otherwise, $i$ is called \emph{non-nested}. We use $\Nc(G)$ to denote the set of all non-nested vertices of $G$.

 \subsection{HVGs~--~Horizontal Visibility Graphs}\label{sect:prel2}
 Given $D=(d_1,\ldots,d_N)\in \mathbb{R}^N$, the \emph{horizontal visibility graph} (or \emph{HVG} for short) of $D$ is the graph $\HVG(D)=([N],E)$, where 
  \begin{equation*}
  E=\{ij~:~d_i>d_k<d_j\text{ for all }1\leq i<k<j\leq N\}
  \end{equation*}
  (see \Cref{figure: example_hvg_1} for an example of a data sequence and its corresponding HVG). Since an HVG is clearly invariant under translation of the underlying sequence $D$ by any vector with equal entries, it does not cause any restriction to consider only non-negative data sequences. This also makes sense from the point of view of applications since there $D$ usually is a data sequence or a time series with non-negative entries. It is also motivated by those applications, that HVGs have to be considered as graphs with fixed vertex labels $1,\ldots, N$ and consequently, two HVGs are considered to be the same if and only if their edge sets are the same and not just if they are isomorphic as unlabeled graphs. We set $\Gc_N=\{\HVG(D)~:~D\in \mathbb{R}_{\geq0}^N\}$ and $\Gc_{N,\neq}=\{\HVG(D)~:~D=(d_1,\ldots,d_N)\in \mathbb{R}_{\geq0}^N,\;d_i\neq d_j\text{ for all } 1\leq i<j\leq N\}$. We note that those two sets are different for $N\geq 4$ (see \Cref{exa: easy properties} for an example). Moreover, we use $P_{N}$ to denote the (inclusion)-minimal HVG in both $\Gc_{N}$ and $\Gc_{N,\neq}$, i.e., $P_N=([N],\{i (i+1)~:~1\leq i\leq N-1\})$. 
   
  \begin{figure}[h]
	\centering
	\begin{subfigure}[b]{.45\textwidth}
		\centering
		\includegraphics[width=.4\linewidth]{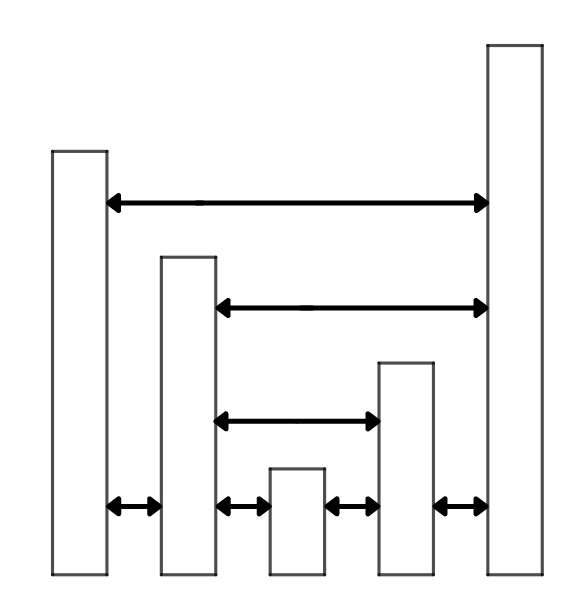}
		\caption{Arrows between entries of $D$ indicate edges in the corresponding HVG.}
		\label{figure: example_hvg_a}
	\end{subfigure}\hspace{5mm}%
	\begin{subfigure}[b]{.45\textwidth}
		\centering
		\includegraphics[width=.6\linewidth]{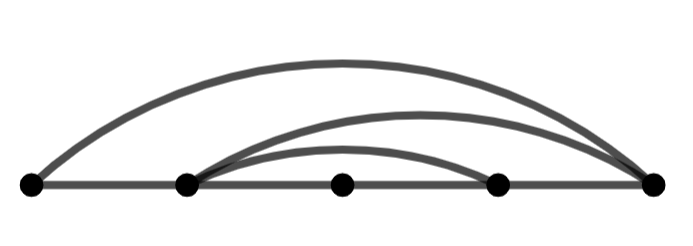}
		\vspace{0.5cm}
		\label{figure: example_hvg_graph_b}
		\caption{The HVG associated to $D=(4, 3, 1, 2, 5)$.\\\ }
	\end{subfigure}
	\caption{The data sequence $D=(4, 3, 1, 2, 5)$ and its associated HVG.}
	\label{figure: example_hvg_1}
\end{figure}
\begin{rem}\label{rem:integral}
Let $D=(d_1,\ldots,d_N)\in \mathbb{R}^N$. We define $\Phi_N:\mathbb{R}^N\to [N]^N$ by
$$
\Phi_N(D)_i=|\{j~:~d_j\leq d_i\}| \quad \text{for } 1\leq i\leq N,
$$
i.e., $\Phi(D)$ reflects the order of the entries of $D$. 
Then, obviously, $\HVG(D)=\HVG(\Phi(D))$ and hence, any HVG is the HVG of a vector of non-negative integers (of size at most $N$). Moreover, if all entries of $D$ are distinct, then $\Phi(D)$ is a permutation of $[N]$. 
\end{rem} 
We summarize some easy but useful property of HVGs in the following lemma. First we have to introduce a simple graph operation. Given $G\in \Gc_N$ and $H\in\Gc_M$, we use $G+H$ to denote the $1$-sum of $G$ and $H$ with respect to the vertices $N\in V(G)$ and $1\in V(H)$, i.e., $G+H$ is obtained by taking the union of $G$ and $H$ and identifying the vertices $N\in V(G)$ and $1\in V(H)$.  To simplify notation, vertices of $V(H)\setminus\{1\}$ will be numbered with $ N+1,\ldots, N+M-1$ in $G+H$ and the identified vertex will be numbered with $N$.
\begin{lem} \label{lem:easy properties}
Let $N\in\mathbb{N}$ and $G\in \Gc_N$. Let $\Nc(G)=\{i_1<\cdots<i_k\}$ and let $\ell\in \Nc(G)$. Then 
	\begin{enumerate}
			\item[(i)] $G$ is non-crossing.
	\item[(ii)] $1$, $N$ and $m_G(\ell)$ are non-nested.
		\item[(iii)] Let $1\leq i<j\leq N$, then (after relabelling the vertices) $G_{[i,j]}\in \Gc_{j-i+1}$. Moreover, if $G\in \Gc_{N,\neq}$, then $G_{[i,j]}\in \Gc_{j-i+1,\neq}$.
			\item[(iv)]  $i_ji_{m}\in E(G)$ if and only if $m=j+1$ or $m=j-1$.
			\item[(v)] There exists $D=(d_1,\ldots,d_N)\in \mathbb{N}^N$ such that $\HVG(D)=G$ and $d_1=N$. Moreover, $D$ can be chosen as a permutation if $G\in \Gc_{N,\neq}$.
			\item[(vi)] $G=G_{[i_1,i_2]}+\cdots +G_{[i_{k-1},i_k]}$.
	\end{enumerate}
\end{lem}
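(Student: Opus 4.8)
The plan is to establish the six items roughly in the listed order, using the earlier ones to prove the later ones. Fix a realizing sequence $D=(d_1,\dots,d_N)$ with $\HVG(D)=G$; by \Cref{rem:integral} we may take $D\in\N^N$ with all entries in $[N]$, and a permutation of $[N]$ if $G\in\Gc_{N,\neq}$. Two trivialities will be used freely: whether $ij\in E(G)$ depends only on the relative order of the entries of $D$; and consecutive vertices are always adjacent (the defining condition is vacuous), so $m_G(\ell)\ge\ell+1$ whenever $\ell<N$. Also, once (ii) is proved we know $1,N\in\Nc(G)$, hence $i_1=1$ and $i_k=N$.

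For (i), if $i<j<k<\ell$ with $ik,j\ell\in E(G)$, then $ik\in E(G)$ evaluated at the intermediate index $j$ gives $d_j<d_k$, while $j\ell\in E(G)$ evaluated at $k$ gives $d_k<d_j$, a contradiction. For (ii): $1$ and $N$ are non-nested because nothing lies left of $1$ or right of $N$; for $m_G(\ell)$ it suffices to treat $\ell<N$ (which is all that is used later), so $m:=m_G(\ell)>\ell$. If $m$ were nested, pick $a<m<b$ with $ab\in E(G)$. Since $\ell m\in E(G)$ and $G$ is non-crossing by (i), these two edges do not cross, which forces $a\le\ell$ (else $\ell<a<m<b$ would be a crossing pair); but $a=\ell$ yields $\ell b\in E(G)$ with $b>m=m_G(\ell)$, contradicting maximality, and $a<\ell$ yields $a<\ell<b$ with $ab\in E(G)$, contradicting $\ell\in\Nc(G)$.

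Statement (iii) holds because for $i\le a<b\le j$ the intermediate indices of $ab$ lie in $[i,j]$, whence $ab\in E(G)\iff ab\in E(\HVG(d_i,\dots,d_j))$; thus $G_{[i,j]}=\HVG(d_i,\dots,d_j)$ after relabelling, and distinct entries stay distinct. Items (iv) and (vi) both rest on the observation that \emph{no edge of $G$ straddles a non-nested vertex}, which is just the definition of nestedness. Hence, if $i_ji_m\in E(G)$ with $|j-m|\ge 2$, the non-nested vertex $i_{j+1}$ (if $m>j$) or $i_{j-1}$ (if $m<j$) would be straddled by this edge---impossible; so $|j-m|=1$. Conversely, for $j<k$ we have $m_G(i_j)\in\Nc(G)$ by (ii) with $m_G(i_j)>i_j$, and no non-nested vertex lies strictly between $i_j$ and $m_G(i_j)$ (it would be straddled by the edge $\{i_j,m_G(i_j)\}$), so $m_G(i_j)=i_{j+1}$; this gives (iv). The same observation shows that the endpoints of any edge of $G$ lie in a common block $[i_t,i_{t+1}]$, so $E(G)$ is the disjoint union of the $E(G_{[i_t,i_{t+1}]})$, the blocks meeting only in $i_2,\dots,i_{k-1}$; that is exactly the claimed $1$-sum, proving (vi).

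Item (v) is the substantive one; I would prove it by induction on $N$ (the case $N=1$ being trivial), using (vi) to reduce to the irreducible case $\Nc(G)=\{1,N\}$. In that case (iv) gives $1N\in E(G)$, so in \emph{every} realizing sequence the entries at $1$ and $N$ are the two largest and all others are strictly smaller; starting from $D$, if $d_1<d_N$ swap these two values (using $1N\in E(G)$ one checks the edge set is unchanged) and then rescale order-preservingly so that $d_1=N$ (a permutation of $[N]$ in the distinct case, where we may also take $d_N=N-1$). If instead $\Nc(G)$ has $k\ge 3$ elements, write $G=B_1+G''$ with $B_1=G_{[1,i_2]}$ and $G''=G_{[i_2,N]}$; both are HVGs by (iii), and $\Nc(B_1)=\{1,i_2\}$, $\Nc(G'')=\{i_2,\dots,i_k\}$ because edges do not straddle non-nested vertices. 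Realize $B_1$ by the irreducible case (its two endpoints getting the two largest values) and $G''$ by the induction hypothesis (its first vertex getting the largest value; $G''$ has fewer than $N$ vertices), and fit the two sequences together on suitably chosen integer ranges so that the shared vertex $i_2$ carries a value dominating all of $G''$'s part: then no edge straddles $i_2$, the edges on each side are unchanged, so the glued sequence realizes $B_1+G''=G$, and the ranges can be chosen so that $d_1=N$ and, in the distinct case, so that $[N]$ is used exactly once. The main obstacle is precisely this last bookkeeping: one must distribute the value ranges over the blocks so that each gluing vertex simultaneously dominates the block to its right and agrees with the ``first vertex gets the largest value'' normalization coming out of the induction---which is why it helps to carry along the stronger statement that, in the irreducible case, the two endpoints receive the two largest values.
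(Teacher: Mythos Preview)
Your proofs of (i)--(iv) and (vi) match the paper's essentially line for line; the paper cites \cite{Mansour} for (i) while you prove it directly, and you are actually more careful in (ii) by restricting to $\ell<N$ (the statement as written fails for $\ell=N$: e.g.\ for $G=\HVG(3,1,2)$ one has $m_G(3)=2$, which is nested).

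The genuine difference is (v). The paper does not prove (v) here at all: it forward-references \Cref{thm:dataGraph} and \Cref{thm: sequence arbitrary data}, whose explicit constructions (the ``standard sequence'' $d_i=\sigma(i)$ and the formula $d_i=N-d_{\nest}(i)$, respectively) visibly satisfy $d_1=N$. Your route instead gives a self-contained induction via the $1$-sum decomposition of (vi), handling the irreducible case $\Nc(G)=\{1,N\}$ by a swap-and-rescale and then gluing. This is correct; the ``bookkeeping obstacle'' you flag dissolves once you notice that the two realizations need \emph{not} agree at the shared vertex $i_2$: shift the $B_1$-values to the top range $[N-i_2+1,N]$ and leave the $G''$-values in $[1,N-i_2+1]$, assigning to $i_2$ the (shifted) $B_1$-value. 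Since $g_1$ is already maximal in $G''$'s sequence, replacing it by any larger value leaves the induced HVG on $[i_2,N]$ unchanged, while the large value at $i_2$ blocks every edge across $i_2$. This is exactly the gluing used in the proof of \Cref{lem:operations}\,(iii). Your approach buys independence from the later theorems at the price of this extra argument; the paper's approach is shorter but creates a (harmless, non-circular) forward reference.
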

\begin{proof}
(i) was shown in \cite[Corollary 5]{Mansour}.

For (ii) note that $1$ and $N$ are non-nested by definition. Now assume by contradiction that $m_G(\ell)\notin\Nc(G)$. Together with (i) it follows that there exist $i<\ell<m_G(\ell)<m$ with $im\in E(G)$. But then $\ell$ is nested, a contradiction. 

For (iii) it suffices to note that after relabelling the vertices of $G_{[i,j]}$ by $1,\ldots, j-i+1$ increasingly, $G_{[i,j]}=\HVG((d_i,d_{i+1},\ldots,d_j))$, where $G=\HVG((d_1,\ldots,d_N))$. The second statement is now obvious.

For (iv), let $i_m \in \Nc(G)\setminus\{N\}$. (ii) implies that $m_G(i_m)=i_{j}$ for some $j$. Moreover, we must have $j=m+1$ since otherwise $i_{m+1}\notin \Nc(G)$. This shows $i_m i_{m+1}\in E(G)$. The same argument also shows that $i_m i_j\notin E(G)$ if $j\geq m+2$. The claim follows.

(v) This is an easy consequence of \Cref{thm:dataGraph} and \Cref{thm: sequence arbitrary data}.

(vi) follows from  (iii) and (iv).
\end{proof}
We provide an example to illustrate the difference between $\Gc_{N}\setminus\Gc_{N,\neq}$.
\begin{exa}\label{exa: easy properties}
 The graph $G=\HVG((3,1,1,4))$, shown in \Cref{figure: exa_easy properties 1}, is the (inclusionwise) smallest HVG that cannot be realized by a sequence with pairwise distinct entries, i.e., $G\in \Gc_{4}\setminus\Gc_{4,\neq}$. The sequence $(4,1,1,3)$ yields the same HVG and satisfies the assumption from (v) of the previous lemma. 
    \begin{figure}[h]
	\centering
	\includegraphics[width=0.3\linewidth]{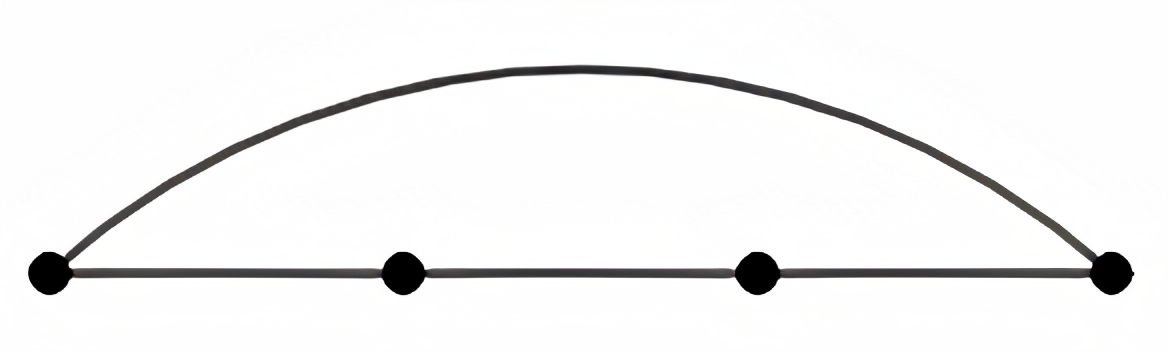}
	\caption{$G=\HVG((4, 1, 1, 3))$.}
	\label{figure: exa_easy properties 1}
\end{figure}
\end{exa}

Motivated by \Cref{lem:easy properties} (iii) it is natural to ask if the set of all HVGs (without fixing the vertex set) is closed under certain graph operations. 

\begin{lem}\label{lem:operations}
Let $M,N\in\mathbb{N}$, $G\in \Gc_N,\ H\in \Gc_M$ and $e\in E(G)\setminus\{i(i+1)~:~1\leq i\leq N-1\}$. Then:
\begin{enumerate}
\item[(i)] $G\setminus e\in \Gc_N$.
\item[(ii)] If $j,\ell\in \Nc(G)$ with $f=j \ell\notin E(G)$, then $G\cup f\in \Gc_N$.
\item[(iii)] $G+H\in \Gc_{N+M-1}$. Moreover, if $G\in\Gc_{N,\neq}$ and $H\in\Gc_{N,\neq}$, then $G+H\in\Gc_{N+M-1,\neq}$.
\end{enumerate}
\end{lem}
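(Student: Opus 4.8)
The plan is to prove the three parts in the order (iii), (i), (ii), since the $1$-sum statement (iii) is used in the proofs of (i) and (ii). For (iii): using \Cref{lem:easy properties}(v) together with the evident invariance of HVGs under reversing the data sequence, pick a realization $g=(g_1,\dots,g_N)$ of $G$ with $g_N=\max_i g_i$ and a realization $h=(h_1,\dots,h_M)$ of $H$ with $h_1=\max_j h_j$, and set $D=(g_1,\dots,g_{N-1},V,h_2',\dots,h_M')$, where $(h_2',\dots,h_M')$ is a translate of $(h_2,\dots,h_M)$ chosen to lie strictly above all of $g_1,\dots,g_{N-1}$ and $V$ is strictly larger than every other entry of $D$. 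Translating a consecutive block of a data sequence does not change the induced HVG, so the restriction of $\HVG(D)$ to $\{1,\dots,N\}$ is $G$ and its restriction to $\{N,\dots,N+M-1\}$ is $H$; and since $d_N=V$ is a strict global maximum it blocks every potential edge $pq$ with $p<N<q$. Hence $\HVG(D)=G+H$. For the $\neq$-statement, choose $g,h$ to be permutations via (v) and take $V=N+M-1$, so that $D$ is a permutation of $[N+M-1]$.

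For (i): write $G=\HVG(D)$ and $e=ab$ with $a<b$; since $e$ is not a path edge, $b\ge a+2$, and because $ab\in E(G)$ every vertex of the open interval $(a,b)$ has value strictly below $\mu:=\min(d_a,d_b)$. Let $c_1<\dots<c_r$ be those vertices of $(a,b)$ whose value equals $h:=\max\{d_v:a<v<b\}$, and define $D'$ by raising each $c_i$ to the value $\mu$ and leaving all other entries unchanged. One then verifies $\HVG(D')=G\setminus e$ by a short case analysis: the edge $e$ disappears because $c_1$ now blocks $a$ from $b$; no other edge is destroyed, since an edge $pq$ straddling some $c_i$ either reaches past $a$ or past $b$ — whence $\min(d_p,d_q)>\mu$ because $ab\in E(G)$ — or has both endpoints in $[a,b]$, in which case, apart from $e$ itself, it cannot exist, as $c_i$ already carried the maximal value on $(a,b)$; and no edge is created, because raising interior vertices can only create edges incident to a raised vertex, and the only potentially new ones not already blocked by another $c_{i'}$ are $ac_1$ and $c_rb$, which were already edges of $G$ (this is precisely why one raises \emph{all} of $c_1,\dots,c_r$: it makes $c_1$ leftmost and $c_r$ rightmost among the maxima, forcing $ac_1,c_rb\in E(G)$). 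Note that $D'$ typically has repeated entries, in accordance with the fact that $G\setminus e$ need not lie in $\Gc_{N,\neq}$.

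For (ii): let $f=j\ell$ with $j<\ell$; by \Cref{lem:easy properties}(iv) consecutive non-nested vertices are adjacent, so $\ell\ge j+2$, and \Cref{lem:easy properties}(vi) gives $G=G_{[1,j]}+G_{[j,\ell]}+G_{[\ell,N]}$, the three factors being HVGs by \Cref{lem:easy properties}(iii). Writing $f'$ for the edge joining the endpoints of the middle factor $M:=G_{[j,\ell]}$, we have $G\cup f=G_{[1,j]}+(M\cup f')+G_{[\ell,N]}$, so by part (iii) it suffices to prove $M\cup f'\in\Gc_m$, where $m$ is the number of vertices of $M$ (relabel $M$ on $[m]$). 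Then $\Nc(M)=\{1=j_0<j_1<\dots<j_q=m\}$ with $q\ge 2$ (if $q=1$ then $m_M(1)=m$, so $1m\in E(M)$, contradicting $f'\notin E(M)$). Each block $B_w:=M_{[j_{w-1},j_w]}$ contains its long edge, hence — combining (v) with its reversal and a rescaling — can be realized by a sequence whose two endpoints both equal a common value $\Mcc$ while all interior entries are $<\Mcc$; concatenating these realizations across the shared glue vertices $j_1,\dots,j_{q-1}$ (each of which then carries the blocking value $\Mcc$) yields a realization $D$ of $M$. Finally let $D'$ agree with $D$ except that the two endpoint values are replaced by $\Mcc+1$. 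Then $1$ and $m$ now see each other, the induced HVG on $[2,m-1]$ is unchanged, and — using $m_M(1)=j_1$ (from \Cref{lem:easy properties}(ii)), that $d_{j_1}=\Mcc$ is maximal on $[1,j_1]$, and that no entry of $(j_1,m)$ exceeds $\Mcc$ — the left-to-right maxima of $(d_2,\dots,d_{m-1})$ are exactly the neighbors of $1$ in $M$, and symmetrically the right-to-left maxima are the neighbors of $m$; hence $\HVG(D')=M\cup f'$.

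The main obstacle is part (ii): one genuinely has to control \emph{which} vertices the two new ``peaks'' at the ends of $M$ can see, and the device that makes this work is the reduction to the block decomposition of \Cref{lem:easy properties}(vi) together with the observation that a realization of a block as in \Cref{lem:easy properties}(v) automatically records the visibility from an endpoint as a left-to-right maximum (so that raising that endpoint creates no unwanted edges); this is morally the same phenomenon exploited by the explicit data sequences of \Cref{thm:dataGraph}. Parts (i) and (iii), by contrast, are direct manipulations of data sequences whose only real content is the routine, if slightly lengthy, verification that no stray edges appear or disappear.
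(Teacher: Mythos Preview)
Your arguments for (i) and (iii) are essentially the paper's: for (i) you raise every interior vertex of maximal height to $\min(d_a,d_b)$, exactly as the paper does (the paper merely says ``it is straightforward'' where you give the case analysis); for (iii) you build the concatenated sequence with a blocking peak at the glue vertex, which is the paper's construction up to the cosmetic choice of shifting $H$ upward rather than $G$. One phrasing to tighten: ``translating a consecutive block does not change the induced HVG'' is false as a general statement --- what you actually use (and what is true) is that the restriction of $\HVG(D)$ to each side of the peak $V$ depends only on the relative order of the entries there.

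For (ii) you take a genuinely different route. The paper works directly on a realizing sequence: it raises $j$ and $\ell$ to $m+1$ and all intermediate non-nested vertices to $m$, observes that the resulting HVG \emph{contains} $G\cup f$, and then invokes part~(i) to strip off any surplus edges. Your argument instead reduces, via \Cref{lem:easy properties}(vi) and part~(iii), to adding the long edge to the middle block $M$, and then builds an explicit realization of $M\cup f'$ by gluing block realizations with common endpoint height~$\Mcc$ and bumping the two outer endpoints to $\Mcc+1$. This is longer but has the virtue of producing an explicit realizing sequence without any cleanup step; the paper's version is shorter but non-constructive in that it relies on (i) to absorb whatever extra edges the crude modification may create. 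One point that deserves a cleaner justification: the claim that each block $B_w$ ``can be realized by a sequence whose two endpoints both equal $\Mcc$'' does not follow from \Cref{lem:easy properties}(v) \emph{and its reversal} alone (those give two different realizations). What makes it work is that the long edge $j_{w-1}j_w$ forces every interior value below $d_{j_w}$, so after taking the realization from (v) with $d_{j_{w-1}}$ maximal you may raise $d_{j_w}$ to that same maximum without changing any edge --- equivalently, you could cite \Cref{thm: sequence arbitrary data}, since in $B_w$ the only non-nested vertices are its two endpoints.
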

Before providing the proof of this lemma, we want to remark that (i) and (ii) are not true if one restricts to $\Gc_{N,\neq}$. For instance, the graph $G$ in \Cref{figure: exa_easy properties 1} is obtained from $G\cup\{13\}\in \Gc_{4,\neq}$ and $P_4\in\Gc_{4,\neq}$ by removing and adding the edge $13$ and $14$, respectively.

\begin{proof}
Let $D=(d_1,\ldots,d_N)\in\mathbb{N}^N$ such that $\HVG(D)=G$. For (i) assume that $e=k\ell$ with $k<\ell$. Let $m=\max\{d_i~:~k<i<\ell\}$ and let $M=\{k<i<\ell~:~d_i=m\}$. Define $\widetilde{D}=(\widetilde{d}_1,\ldots,\widetilde{d}_N)$ by 
\begin{equation*}
\widetilde{d}_i=\begin{cases}
d_i,\quad &\text{if } i\notin M\\
\min(d_k,d_\ell),\quad &\text{if } i\in M.
\end{cases}
\end{equation*}
It is straightforward to show that $\HVG(\widetilde{D})=G\setminus e$, which proves the claim.

For (ii) let $m=\max\{d_i~:~i\in \Nc(G), j\leq i\leq \ell\}$ and set
\begin{equation*}
\widetilde{d}_i=\begin{cases}
m+1,\quad &\text{if } i\in\{j,\ell\}\\
m,\quad &\text{if } i \in \Nc(G) \text{ and } j<i<\ell\\
d_i,\quad \text{otherwise.} 
\end{cases}
\end{equation*}
It is easy to see that $\HVG(\widetilde{D})\supset G\cup f$. If there exists $e\in E(\HVG(\widetilde{D}))\setminus E(G\cup f)$, then we can apply (i) and delete those edges.

For (iii) we can assume by \Cref{lem:easy properties} (v) that $d_1=N$. Let further $F=(f_1,\ldots,f_M)\in \mathbb{N}^M$ with $\HVG(F)=H$ and $f_1=M$. Define $K=(k_1,\ldots,k_{M+N-1})\in\mathbb{N}^{M+N-1}$ by 
\begin{equation*}
k_i=\begin{cases}
d_i+M\quad &\text{if } 1\leq i\leq N\\
f_{i-N+1} \quad &\text{if } N<i\leq M+N-1
\end{cases}
\end{equation*}
and set $J=\HVG(K)$. We obviously have $J_{[N]}=G$ and since $k_N=d_N+M> M=f_1\geq f_{i-N+1}$ for all $N<i\leq M+N-1$ it also holds that $J_{[N,N+M-1]}=H$. The same argument shows that $ij\notin E(J)$ for any $1\leq i< N$ and $N< j\leq M+N-1$, which, together with the previous discussion, implies $J=G+H$ and hence $G+H\in\Gc_{N+M-1}$. Moreover, if $D$ and $F$ have only distinct entries, so does $K$, which shows the second claim.
\end{proof}
\begin{rem}
Combining \Cref{lem:easy properties} (iii) and (vi) with \Cref{lem:operations} (i) and (ii) it is easy to see that if $G\in \Gc_N$ and $1\le j<\ell\le N$ such that $G\cup j\ell$ is non-crossing, then $G\cup j\ell\in\Gc_N.$ 
\end{rem}


\section{Horizontal Visibility Graphs from distinct data}
     \label{sec: Distinct Data}

In this section, we focus on HVGs in $\Gc_{N,\neq}$, i.e., HVGs corresponding to data sequences with pairwise distinct entries. We have seen in \Cref{rem:integral} that such an HVG is the HVG of some permutation of $[N]$. Our first goal is to construct such a permutation, just from the knowledge of the graph, without knowing a realizing data sequence. In the second part of this section, we prove \Cref{thm: degree sequence determines HVG}, i.e., we show that HVGs in $\Gc_{N,\neq}$ are uniquely determined by their \emph{ordered} degree sequence. Our proof also yields an algorithm how to construct such a data sequence. This extends corresponding results for \emph{directed} ordered degree sequences (see \cite[Proposition 5]{opella2019horizontal}) as well as for HVGs in canonical form, i.e., HVGs such that the first and last entry of the corresponding data sequence are the maximal ones \cite[Theorem 1]{luque2017canonical}. In the last two subsections, we consider the enumerative question of how many HVGs in $\Gc_{N,\neq}$ exist. In particular, we provide two proofs of \Cref{thm: Catalan numbers} (i), a purely algebraic one and a bijective one.

\subsection{From HVGs in $\Gc_{N,\neq}$ to data sequences}\label{subsect:GraphToData}
In the following, we let $N\in\mathbb{N}$, $G=([N],E)\in\Gc_{N,\neq}$ and we are seeking  $D\in\mathbb{N}^N$ such that $\HVG(D)=G$. To this end, we first need some further notation. 
A vertex $v\in [N]$ is called \emph{$m$-nested} if
$$
d_{\nest}(v):=|\{ij~:i<v<j, ij\in E\}|=m.
$$
$d_{\nest}(v)$ is also called the \emph{nesting degree} of $v$ and an edge $ij\in E$ with $i<v<j$ is referred to as \emph{nesting edge} of $v$.

\begin{theorem}\label{thm:dataGraph}
    Let $N\in \mathbb{N}$ and $G\in\Gc_{N,\neq}$. Let $\sigma: [N]\to [N]$ be the unique permutation of the vertices of $G$ such that:
    \begin{itemize}
        \item[(i)] $d_{\nest}(\sigma^{-1}(1))\geq d_{\nest}(\sigma^{-1}(2))\geq \cdots\geq d_{\nest}(\sigma^{-1}(N-1))\geq d_{\nest}(\sigma^{-1}(N))$, and,
        \item[(ii)] if $d_{\nest}(\sigma^{-1}(i))=d_{\nest}(\sigma^{-1}(j))$, then $i<j$ iff $\sigma^{-1}(i)>\sigma^{-1}(j)$.
    \end{itemize}
    Let $d_i=\sigma(i)$ and $D=(d_1,\ldots,d_N)$. Then, $D$ realizes $G$, i.e., $HVG(D)=G$. In particular, $d_1=N$.
\end{theorem}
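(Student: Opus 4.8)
The plan is to show that the permutation $D$ defined by ordering vertices according to their nesting degrees (with ties broken by reverse position) realizes $G$. Since $G\in\Gc_{N,\neq}$, by \Cref{rem:integral} there exists \emph{some} permutation $E=(e_1,\ldots,e_N)$ of $[N]$ with $\HVG(E)=G$; the idea is to compare $D$ with $E$ and argue that they induce the same HVG because the only thing the HVG construction "sees" about a data sequence with distinct entries is, for each pair $i<j$, whether there is an entry between position $i$ and position $j$ exceeding both $d_i$ and $d_j$ — and this is controlled precisely by nesting structure.

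First I would record the key dictionary: for a data sequence with distinct entries, $ij\in E(\HVG(D))$ iff $\min(d_i,d_j)>d_k$ for all $i<k<j$, equivalently iff the smaller of $d_i,d_j$ is larger than every entry strictly between them. From this one gets that the value $d_v$ of a vertex $v$ must be "large enough" exactly to the extent that $v$ lies under many nesting edges: if $ij\in E$ with $i<v<j$ then (applying the dictionary to the edge $ij$ and the position $v$) we need $d_v<\min(d_i,d_j)$. Thus every nesting edge of $v$ forces $d_v$ to be smaller than two other specified values, and iterating, $d_v$ must be smaller than the values at the endpoints of a whole chain of nested edges over $v$. This suggests that, up to the tie-breaking, the value at $v$ is determined by $d_{\nest}(v)$: more precisely I would prove that in \emph{any} realizing permutation $E$, if $d_{\nest}(u)>d_{\nest}(v)$ then $e_u<e_v$, and if $d_{\nest}(u)=d_{\nest}(v)$ with (say) $u>v$ then also $e_u<e_v$ — forcing $E=D$, which simultaneously establishes uniqueness of $\sigma$ and that $D$ realizes $G$. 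The statement $d_1=N$ then follows since $1$ is non-nested (so $d_{\nest}(1)=0$) and among the non-nested vertices position $1$ is the largest index with nesting degree $0$... wait — I should instead note $1$ has the \emph{smallest} index among degree-$0$ vertices, so by the tie-break it gets the \emph{largest} value; that is, $\sigma(1)=N$, i.e. $d_1=N$.

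To make the comparison rigorous I would proceed by induction on $N$, using \Cref{lem:easy properties}. Write $\Nc(G)=\{i_1=1<i_2<\cdots<i_k=N\}$. By \Cref{lem:easy properties} (vi), $G=G_{[i_1,i_2]}+\cdots+G_{[i_{k-1},i_k]}$, and each block $G_{[i_t,i_{t+1}]}$ has $i_t,i_{t+1}$ as its only non-nested vertices. If $k>2$, every vertex of $G_{[i_t,i_{t+1}]}$ other than $i_t,i_{t+1}$ has strictly positive nesting degree in $G_{[i_t,i_{t+1}]}$ plus... — actually the cleanest reduction is: the non-nested vertices are exactly the degree-$0$ vertices, they are $i_1,\ldots,i_k$, and by the tie-break they receive the values $N,N-1,\ldots,N-k+1$ in order of position. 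Removing... here is where I would instead strip off the edge $1N$ if present, or the nested vertices one block at a time. Concretely: if $G$ has no nested vertices, $G=P_N$ and the assignment is forced and obviously realizing. Otherwise pick a vertex $v$ of maximal nesting degree; then $v$ gets value $\sigma(v)=1$, and I claim $\HVG(D)=G$ iff $\HVG(D')=G\setminus\{v\}$ where $D'$ is $D$ with the entry at $v$ deleted — because an entry equal to the global minimum, sitting strictly between its two neighbors (which it must by maximality of nesting degree and \Cref{lem:easy properties} (iv)), contributes exactly the two edges to its neighbors and blocks nothing. One checks $G\setminus\{v\}$ is again in $\Gc_{N-1,\neq}$ and that $D'$ is exactly the permutation the theorem prescribes for $G\setminus\{v\}$ (the nesting degrees of all other vertices drop by the number of nesting edges through $v$ that covered them, and one verifies the induced order is preserved); then invoke the inductive hypothesis.

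The main obstacle I anticipate is precisely this last bookkeeping step: verifying that deleting the minimum-valued vertex $v$ changes the nesting degrees of the remaining vertices in a way that is \emph{consistent} with the theorem's ordering for $G\setminus\{v\}$, i.e. that removing $v$ together with its (at most one) nesting edge... — in fact a vertex $v$ of maximal nesting degree need not have a nesting edge incident to it, so deleting $v$ removes only the two path-edges $v^-v$ and $vv^+$ and adds the edge $v^-v^+$; I must check this net change decreases $d_{\nest}(w)$ by exactly $1$ for $w$ with $v^-<w<v^+$, $w\neq v$, and leaves all other nesting degrees unchanged, and that this uniform shift does not disturb the tie-broken total order. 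Handling the interaction of this shift with the strict inequalities and the position-based tie-break — especially ruling out that two previously-distinguished vertices become tied or swap — is the delicate part and will require care with \Cref{lem:easy properties} (iv).
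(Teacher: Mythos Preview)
Your first approach---showing that \emph{every} realizing permutation $E$ must satisfy the ordering constraints and hence equal $D$---is simply false. Different permutations can realize the same HVG: for instance, both $(4,3,1,2,7,5,6)$ and $(7,4,2,3,6,1,5)$ realize the graph in \Cref{figure: exa_standardform}, and in the first of these the tie-breaking rule ``larger position gets smaller value among equal nesting degrees'' is violated (vertices $3$ and $6$ both have nesting degree $2$, yet $e_3=1<5=e_6$). So this line cannot be salvaged.

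Your second, inductive approach is genuinely different from the paper's proof and can be made to work, but your execution has a gap and an error. The gap: you need $(v-1)(v+1)\in E(G)$ for the rightmost vertex $v$ of maximal nesting degree, since otherwise $G\setminus\{v\}$ is not even an HVG (it would miss a path edge), and \Cref{lem:remove degree 2 vertex} does not apply. This is true but not automatic; one shows first that $\widetilde d_{v+1}>\widetilde d_v$ in any realization (else $d_{\nest}(v+1)\ge d_{\nest}(v)$ with $v+1>v$, contradicting the choice of $v$), and then that $\widetilde d_{v-1}>\widetilde d_v$ by a short descent argument (if not, one propagates leftwards and eventually forces vertex $1$ to have maximal nesting degree, impossible unless $G=P_N$). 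The error: your bookkeeping on nesting degrees is wrong. Since $v$ has degree $2$ with neighbours $v-1,v+1$, deleting $v$ removes only the edges $(v-1)v$ and $v(v+1)$, neither of which nests any vertex; and $(v-1)(v+1)$ was \emph{already} in $G$, so nothing is added. Hence $d_{\nest}^{G\setminus\{v\}}(w)=d_{\nest}^G(w)$ for all $w\neq v$, and the induced order on $[N]\setminus\{v\}$ is literally $\sigma$ restricted and shifted---much simpler than the ``uniform shift by $1$'' you worry about.

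For comparison, the paper avoids induction entirely: it fixes some realization $\widetilde D$ of $G$, sets $H=\HVG(D)$, and proves $E(G)=E(H)$ directly. For $ij\in E(G)$ one observes that every nesting edge of $i$ (or $j$) is also a nesting edge of each $k$ with $i<k<j$, and $ij$ itself is an extra one, so $d_{\nest}(k)>\max(d_{\nest}(i),d_{\nest}(j))$ and hence $d_k<\min(d_i,d_j)$. For $ij\in E(H)$ one argues by contradiction using $\widetilde D$: if some $\widetilde d_k\ge\min(\widetilde d_i,\widetilde d_j)$, a short case analysis on which side the violation lies produces either a nesting-degree inequality forcing $d_k\ge\min(d_i,d_j)$, or a crossing in $H$, both impossible. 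This direct route is shorter and sidesteps the delicate verification of $(v-1)(v+1)\in E(G)$.
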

Intuitively, the permutation $\sigma$ corresponds to the ordering $\sigma^{-1}(1),\ldots,\sigma^{-1}(N)$ of the vertices of $G$, that first orders the vertices by decreasing nesting degree and then from right to left among vertices with the same nesting degree. In particular, the vertex $1$ is always the vertex at the last position, i.e., $d_1=\sigma(1)=N$. 
In the following, we refer to the sequence $D$ in \Cref{thm:dataGraph} as the \emph{standard sequence} of a given HVG. 

\begin{proof}
Let $\widetilde{D}=(\widetilde{d}_1,\ldots,\widetilde{d}_N)\in\mathbb{N}^N$ with $\HVG(\widetilde{D})=G$ and let $H=\HVG(D)$. We need to show that $H=G$. For this aim let $ij\in E(G)$ with $2\leq i+1<j\leq N$. Since we must have $\widetilde{d}_i>\widetilde{d}_k<\widetilde{d}_j$ for $i<k<j$, there is no edge in $E(G)$ of the form $uv$ with $u<i<v<j$ or $i<u<j<v$. In particular, if $uv$ is a nesting edge of $i$, then $u<i<j\leq v$ and thus $uv$ is a nesting edge for any $i<k<j$. As $ij$ is also a nesting edge for any $i<k<j$, we conclude that $d_{\nest}(k)>d_{\nest}(i)$, i.e., $d_k=\sigma(k)<\sigma(i)=d_i$ for any $i<k<j$. The analogous reasoning shows $d_k<d_j$ for $i<k<j$. This implies $ij\in E(H)$. 

Let now $ij\in E(H)$ with $2\leq i+1<j\leq N$. In order to show that $ij\in E(G)$ we need to prove that $\widetilde{d}_i>\widetilde{d}_k<\widetilde{d}_j$ for all $i<k<j$. Assume by the contrary that there exists $i<k<j$ with $\widetilde{d}_k> \min(\widetilde{d}_i,\widetilde{d}_j)$. We distinguish two cases.

\noindent{\sf Case 1:} There exists $i<k<j$ with $\widetilde{d}_k>\widetilde{d}_j$. Let $k$ be the maximal vertex with this property. We then have  $\widetilde{d}_\ell<\widetilde{d}_j$ for all $k<\ell<j$. If $uv$ is a nesting edge of $k$ (in $G$), it follows that $u<k<j<v$ and hence $d_{\nest}(j)\geq d_{\nest}(k)$ (in $G$). Using that $k<j$ we infer that $d_j=\sigma(j)<\sigma(k)=d_k$, which contradicts the assumption that $ij\in E(H)$. Thus, $ij\in E(G)$. 

\noindent{\sf Case 2:} There exists $i<k<j$ with $\widetilde{d}_k>\widetilde{d}_i$ and $\widetilde{d}_\ell<\widetilde{d}_j$ for all $i<\ell<j$.
Let $k$ be minimal with this property. Similar arguments as in Case 1 show that $d_{\nest}(k)\geq d_{\nest}(j)$. If $d_{\nest}(k)=d_{\nest}(j)$, we conclude that $d_k=\sigma(k)>\sigma(j)=d_j$ (as $k<j$) which is a contradiction to $ij\in E(H)$. If $d_{\nest}(k)>d_{\nest}(j)$, there has to exist an edge $uv\in E(G)$ with $u<k<v<j$. Since $\widetilde{d}_k>\widetilde{d}_i$ and $ \widetilde{d}_\ell<\widetilde{d}_i$ for all $i<\ell<k$, we must have $u<i$. 
It follows from the first part of this proof that we also have $uv\in E(H)$. But then the edges $uv$ and $ij$ are crossing in $H$, contradicting the fact that $H$ is an HVG (see \Cref{lem:easy properties} (i)). Hence, $ij\in E(G)$. 

Since $i(i+1)$ for $1\leq i\leq N-1$ lies in any HVG, we conclude $G=H$.
\end{proof}

Next, we provide an example for the standard sequence.
\begin{exa}\label{exa: easy properties2}
The sequences $(4,3,1,2,7,5,6)$ and  $(7,4,2,3,6,1,5)$ both realize the HVG shown in \Cref{figure: exa_standardform}. The second sequence meets the condition in \Cref{lem:easy properties} (v) and is constructed using \Cref{thm:dataGraph}.
\begin{figure}[h]
	\centering
	\includegraphics[width=0.7\linewidth]{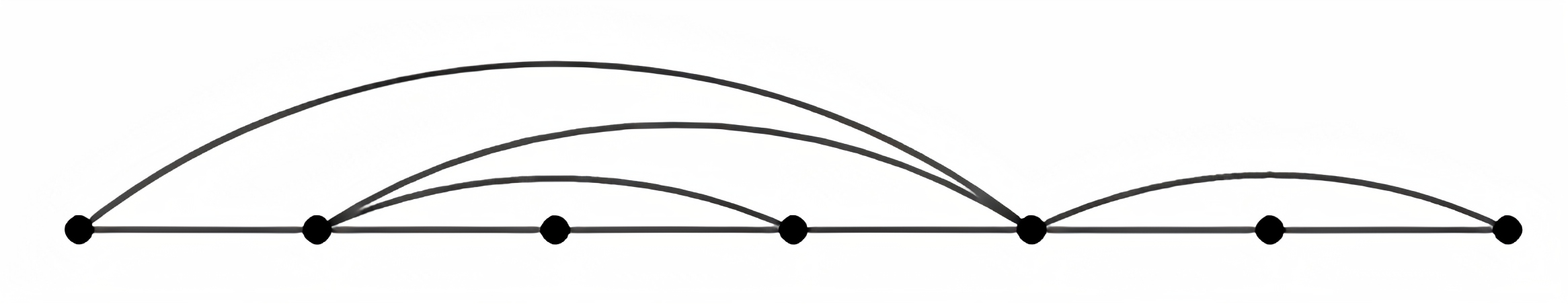}
	\caption{The HVG of the sequences  $(4,3,1,2,7,5,6)$ and  $(7,4,2,3,6,1,5)$.}
	\label{figure: exa_standardform}
\end{figure}

\end{exa}

\subsection{HVGs and degree sequences}\label{sec: Bij thm}

We start with some simple lemmas that will be crucial to prove that an HVG in $\Gc_{N,\neq}$ is uniquely determined by its ordered degree sequence.

\begin{lem}\label{lem: inner 2} 
    Let $N\in \mathbb{N}$, $N\geq 3$ and $G\in\Gc_{N,\neq}\setminus \{P_{N}\}$. Then there exists $2\leq i\leq N-1$ such that $\delta_i=2$ and $(i-1)(i+1)\in E(G)$. 
\end{lem}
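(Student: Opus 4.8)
The goal is to find an ``inner'' vertex $i$ of degree exactly $2$ whose two neighbors are $i-1$ and $i+1$, forming a triangle $(i-1)i(i+1)$. The natural place to look is among the non-nested vertices of $G$, since by \Cref{lem:easy properties} these carry a lot of structure. Write $\Nc(G)=\{i_1<\cdots<i_k\}$ with $i_1=1$ and $i_k=N$. By \Cref{lem:easy properties} (iv), consecutive non-nested vertices $i_j,i_{j+1}$ are adjacent, and no two non-consecutive ones are. Since $G\neq P_N$, the graph has an edge $ab$ with $b\geq a+2$; such an edge makes every vertex strictly between $a$ and $b$ nested, so $G$ has at least one nested vertex and hence $\Nc(G)\neq[N]$, i.e.\ there is a gap $i_{j+1}\geq i_j+2$ for some $j$.

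\textbf{Key steps.} First I would pick a gap, i.e.\ indices with $i_{j+1}\geq i_j+2$, and look at the induced subgraph on the interval $[i_j,i_{j+1}]$. By \Cref{lem:easy properties} (iii) this is again an HVG (on $i_{j+1}-i_j+1\geq 3$ vertices), and by construction all of its interior vertices $i_j+1,\dots,i_{j+1}-1$ are nested in $G$; moreover $i_j$ and $i_{j+1}$ are the only non-nested vertices of this piece, and $i_ji_{j+1}\in E(G)$. So it suffices to prove the statement for the ``building block'' situation: an HVG $H$ on $[1,M]$, $M\geq 3$, whose only non-nested vertices are $1$ and $M$ and with $1M\in E(H)$. (This is exactly one of the summands $G_{[i_j,i_{j+1}]}$ in the decomposition of \Cref{lem:easy properties} (vi), so reducing to it is legitimate.) Second, within such a block, I would use the standard sequence $D=(d_1,\dots,d_M)$ from \Cref{thm:dataGraph}: here $d_1=M$ and also $d_M=M-1$ (the edge $1M$ forces $M$ to be non-nested and to have nesting degree $0$, and among the two vertices $1,M$ of nesting degree $0$ the tie-breaking rule in \Cref{thm:dataGraph}(ii) puts $1$ last, so $\sigma(1)=M$, $\sigma(M)=M-1$). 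Then every interior vertex has strictly smaller value than both $d_1$ and $d_M$. Let $i$ be an interior vertex realizing the global minimum of $D$ on $[2,M-1]$ (equivalently the vertex sent to the smallest label); then $d_{i-1}>d_i<d_{i+1}$, so $(i-1)(i+1)\in E(H)$. Third, I must check $\delta_i=2$: the neighbors of $i$ are exactly those $j$ with $d_j>d_\ell<d_j$ for the relevant range; since $d_i$ is the smallest interior value, any neighbor $j<i$ would need $d_\ell<d_i$ for $i<\ell$... more carefully, $i$ sees $i-1$ and $i+1$ trivially, and it cannot see any farther vertex $j<i-1$ because $d_{i-1}$ (being an interior value $<d_1$) blocks it unless $d_{i-1}<d_i$, impossible by minimality of $d_i$ (after a tiny argument to handle the case $i-1=1$, where $d_1=M$ certainly blocks); symmetrically on the right. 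Hence $N(i)=\{i-1,i+1\}$ and $\delta_i=2$, and translating $i$ back to a vertex of $G$ in the interval $(i_j,i_{j+1})$ gives a vertex of $G$ with $\delta_i=2$ and $(i-1)(i+1)\in E(G)$ (degrees computed in the block agree with degrees in $G$ for strictly interior vertices, since by \Cref{lem:easy properties} (vi) the block meets the rest of $G$ only at its endpoints).

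\textbf{Main obstacle.} The delicate point is the verification that the chosen $i$ really has degree $2$ \emph{in $G$}, not merely in the block: one must combine the block decomposition (so that $i$'s neighbors all lie in $[i_j,i_{j+1}]$) with the minimality argument ruling out long edges from $i$. A cleaner route that avoids invoking the standard sequence at all: argue purely combinatorially by taking any $M\in\mathbb{N}^N$ realizing $G$, restricting to a gap interval $[i_j,i_{j+1}]$, and letting $i$ be a vertex of minimum $d$-value strictly inside; then $d_{i-1}>d_i<d_{i+1}$ gives the edge $(i-1)(i+1)$, and the non-crossing property (\Cref{lem:easy properties}(i)) together with $i$ being interior to a block whose endpoints are non-nested forces $N(i)=\{i-1,i+1\}$ — any other edge at $i$ would either cross $i_ji_{j+1}$ or contradict minimality of $d_i$. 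I expect the whole argument to run in under half a page; the only thing requiring care is bookkeeping near the endpoints of the interval (the cases $i=i_j+1$ and $i=i_{j+1}-1$).
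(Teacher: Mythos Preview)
Your argument is correct, and the core observation --- a vertex where a realizing data sequence attains its minimum has degree~$2$ and its two neighbors are adjacent --- is exactly what the paper uses. The difference is that the paper applies it to $G$ directly rather than to a block: taking the standard sequence $D$ of $G$ itself (\Cref{thm:dataGraph}), one has $d_1=N$, and since $G\neq P_N$ forces $|\Nc(G)|<N$, the tie-breaking rule gives $d_N=N-|\Nc(G)|+1>1$; hence the unique $i$ with $d_i=1$ is interior, and minimality of $d_i$ instantly yields $\delta_i=2$ and $(i-1)(i+1)\in E(G)$. Your preliminary decomposition into blocks via \Cref{lem:easy properties}~(vi), and the subsequent transfer of degrees from the block back to $G$, are sound but superfluous for this route. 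Where the block decomposition \emph{does} earn its keep is in your ``cleaner route'': for an arbitrary realizing sequence the global minimum may sit at an endpoint (e.g.\ $D=(1,4,2,3)$ realizes a graph in $\Gc_{4,\neq}\setminus\{P_4\}$), so restricting to an interval $[i_j,i_{j+1}]$ with $i_ji_{j+1}\in E(G)$ is precisely what pins the minimum to the interior. That alternative is valid and has the mild advantage of not depending on the specific ordering in \Cref{thm:dataGraph}, at the cost of the extra bookkeeping you flag.
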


\begin{proof}
 Let $D\in [N]^N$ be the standard sequence of $G$ (see \Cref{thm:dataGraph}). We then have $d_1=N$ and $d_N=N-|\Nc(G)|+1$. Since $G\neq P_N$ we have $|\Nc(G)|<N$ and hence $d_N\neq 1$. In particular, there exists $1< i< N$ with $d_i=1$. As $D$ is the standard sequence, we have $d_j>1$ for all $j\neq i$ which implies that $\delta_i=2$ and $(i-1)(i+1)\in E(G)$.
\end{proof}

The drawback of the previous lemma is that we cannot yet tell from a given degree sequence which inner $2$s fulfill the assumption of the corresponding neighboring vertices being adjacent. The next lemma solves this difficulty. 
\begin{lem}\label{lem: first 2}
Let $N\in \mathbb{N}$, $N\geq 3$, $G\in \Gc_{N,\neq}\setminus\{P_N\}$ and $\Delta=(\delta_1,\ldots,\delta_N)$ be the ordered degree sequence of $G$. Then:
\begin{enumerate}
\item[(i)] If $\delta_2=2$ and $\delta_1\neq 1$, then $13\in E(G)$.
\item[(ii)] If $\delta_2\neq 2$ or $\delta_1=1$ and $3\leq i\leq N-1$ is minimal with $\delta_i=2$ and $\delta_{i-1}\geq 3$, then $(i-1)(i+1)\in E(G)$. 
\end{enumerate}
\end{lem}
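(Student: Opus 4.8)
\textbf{Proof proposal for \Cref{lem: first 2}.}

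The plan is to use the standard sequence $D = (d_1, \ldots, d_N)$ of $G$ from \Cref{thm:dataGraph} as the main tool, since it encodes the nesting structure explicitly. Recall that in the standard sequence $d_1 = N$ and the vertex $i$ with $d_i = 1$ is the unique non-nested vertex that sits furthest to the right among all vertices of maximal nesting degree; more generally, $d_\ell = \sigma(\ell)$ where $\sigma$ orders vertices by decreasing nesting degree and then right-to-left. By \Cref{lem: inner 2}, at least one inner vertex $v$ has $\delta_v = 2$ with $(v-1)(v+1) \in E(G)$, and such a vertex is precisely a $1$-valued entry of $D$ that is flanked by two larger entries whose common edge is a nesting edge of $v$. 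So degree-$2$ inner vertices of $G$ come in two flavors: those with $(v-1)(v+1) \in E(G)$ (equivalently $d_v < \min(d_{v-1}, d_{v+1})$ in the standard sequence, i.e. $v$ is a "local minimum" of $D$), and those where $v$ is adjacent only to one of $v-1, v+1$ in $G$ and has a further neighbor.

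For part (i): if $\delta_2 = 2$ and $\delta_1 \neq 1$, I want to rule out that vertex $2$'s two neighbors are $1$ and some $k > 3$. Since $\delta_1 \neq 1$, vertex $1$ has a neighbor other than $2$, say $1m \in E(G)$ with $m \geq 3$; but then $12 \in E(G)$ and $1m \in E(G)$ with $2 < m$, and if the other neighbor of $2$ were some $k \geq 4$, the edge $2k$ would either cross $1m$ (if $3 \le m < k$) — impossible by \Cref{lem:easy properties}(i) — or be nested inside with $m > k$, but $m \ge 3$ and $2k$ with $k \ge 4$ forces the configuration $1 < 2 < k$, and $1m$ with $m \geq 3$; a short case check on whether $m \leq k$ or $m > k$ shows non-crossingness forces $m = k$, hence $2$ has neighbors $1$ and $k = m$, and then $1k \in E(G)$ together with $12, 2k \in E(G)$ — but I must still show $k = 3$. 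Here I use that $2$ has degree exactly $2$: its only neighbors are $1$ and $k$, so no vertex strictly between $2$ and $k$ is visible from $2$, forcing the standard-sequence values $d_3, \ldots, d_{k-1}$ all to exceed... this is the wrong direction; instead, I observe that $1k \in E(G)$ with $k \geq 4$ would make vertex $2$ nested, hence $d_{\nest}(2) \geq 1$, while at the same time $2$ sees $1$, so $d_2 > d_j$ is forced for vertices between — I think the cleaner route is: $12 \in E(G)$ forces $d_1 > d_j$ for $1 < j < 2$ vacuously, and if $2k \in E(G)$ with $k \geq 3$ then $d_j < \min(d_2, d_k)$ for $2 < j < k$, so $d_2, d_k$ both exceed $d_3$; but $1k \in E(G)$ (needed since $\delta_2 = 2$ means $1, 2, k$ all mutually... no). Let me restructure: the honest approach is to directly examine the neighbor structure. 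If $\delta_2 = 2$, vertex $2$ sees exactly $\{1, k\}$ for some $k \geq 3$. If $k = 3$ we are done. If $k \geq 4$, then since $2$ does not see $3$, there must be a vertex between $2$ and $k$ blocking it, i.e. $\max(d_3,\dots,d_{k-1}) \ge d_2$ — but $2$ sees $1$ means nothing is between them; and $1$ seeing $2$ but $\delta_1 \ne 1$ means $1$ sees some $m$. Non-crossing forces $m \le k$. If $m < k$ then $1m$ nests $2$, contradicting $\delta_2 = 2$ having $2$ see $k > m$ across edge $1m$... actually $2k$ and $1m$ with $1 < 2 < m < k$ do cross — contradiction. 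So $m \ge k$, and by the same crossing argument $m \le k$, hence $m = k$ and $1k \in E(G)$; then $2$ is nested by $1k$, but $2k \in E(G)$ with $2$ nested and $k \geq 4$: is that a contradiction? No — so instead I note $d_2 = \sigma(2)$ and since $2$ is nested, $d_{\nest}(2) \geq 1$; meanwhile vertex $N$ has $d_N = N - |\Nc(G)| + 1$, and... I will instead invoke that $12, 2k, 1k \in E(G)$ with no vertices of $G$ between $1$ and $2$ means the induced graph $G_{[1,k]}$ has $2$ of degree $2$ seen only inside, and apply induction or \Cref{lem:easy properties}(vi) to the block decomposition to pin down $k = 3$. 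This last pinning-down is where I expect to spend the most care.

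For part (ii): assuming $\delta_2 \neq 2$ or $\delta_1 = 1$, let $i$ be minimal with $3 \leq i \leq N-1$, $\delta_i = 2$, $\delta_{i-1} \geq 3$. I want to show $(i-1)(i+1) \in E(G)$, i.e. that $i$ is a local minimum of the standard sequence. Suppose not; then $i$'s two neighbors are, say, $i+1$ and some $j < i-1$ (the mirror case $i-1$ and some $j > i+1$ being symmetric, but note the hypothesis $\delta_{i-1} \ge 3$ and minimality break the symmetry and should exclude one side). If $j(i) \in E(G)$ and $(i)(i+1) \in E(G)$ with $j < i - 1$, then the edge $j i$ nests the vertex $i - 1$; combined with $\delta_{i-1} \geq 3$ and non-crossingness, vertex $i-1$ must see both neighbors inside $[j, i]$, but $i-1$ is adjacent to $i$... working out the forced degrees: $i - 1$ sees $i$ and, being nested by $ji$, its other neighbors lie in $(j, i)$; the first such neighbor to the left, call it $j'$, has $j < j' \le i - 2$ wait $j' < i-1$, and then $\delta_{j'} \geq 2$. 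I then want to produce, by descending to $[j, i-1]$ or by the block decomposition \Cref{lem:easy properties}(vi), a degree-$2$ inner vertex $i' < i$ with $\delta_{i'-1} \geq 3$, contradicting minimality of $i$ — OR reach $i = 3$ and handle that base case using the hypothesis $\delta_2 \neq 2$ or $\delta_1 = 1$ (this is exactly why that hypothesis is needed: it prevents the leftmost problematic configuration from occurring at $i = 3$). The main obstacle throughout is the bookkeeping of which side the "extra" neighbor of $i$ lies on and chasing the minimality hypothesis down the induced subgraph without losing the degree-$\geq 3$ condition on the left neighbor; I expect to organize this as: reduce to the standard sequence, translate "$(i-1)(i+1) \notin E(G)$" into "$i$ is not a local minimum", then do a leftward descent argument bottoming out at the hypothesis of part (ii).
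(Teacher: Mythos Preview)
Your proposal has a genuine gap stemming from one overlooked fact: in \emph{every} HVG, the edge $i(i+1)$ is present for all $1\le i\le N-1$ (the visibility condition is vacuous for consecutive indices; this is why $P_N\subseteq G$). Consequently, if $\delta_2=2$ then the neighbors of $2$ are automatically $\{1,3\}$, and if $\delta_i=2$ for an inner $i$ then its neighbors are automatically $\{i-1,i+1\}$. Your case analyses --- ``vertex $2$ sees exactly $\{1,k\}$ for some $k\ge 3$'', ``$i$'s two neighbors are $i+1$ and some $j<i-1$'' --- are therefore chasing impossible configurations, and you never engage with the actual content of the lemma, which is whether the \emph{additional} edge $13$ (resp.\ $(i-1)(i+1)$) is present.

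Here is what the paper does, and what your standard-sequence intuition should be redirected toward. For (i): $d_1=N>d_2$, and if $13\notin E(G)$ then the obstruction forces $d_2>d_3$; since $\delta_1\ge 2$, vertex $1$ has a neighbor $\ell>3$, giving $d_\ell>d_2$, and the minimal such $m$ satisfies $2m\in E(G)$ --- contradicting $\delta_2=2$. For (ii): one first shows $i-1$ has some neighbor $\ell\ge i+1$. If not, $\delta_{i-1}\ge 3$ forces an edge $j(i-1)$ with $j\le i-3$; applying \Cref{lem: inner 2} to $G_{[j,i-1]}$ produces a smaller inner degree-$2$ vertex $k$ whose left neighbor has degree $\ge 3$ (here one checks $k\neq 2$ using the hypothesis of (ii)), contradicting minimality of $i$. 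With such an $\ell$ in hand, $d_{i-1}>d_i$, and the argument of (i) finishes. Your ``local minimum'' picture and the descent-to-minimality idea are both sound instincts --- they simply need to be aimed at proving $(i-1)(i+1)\in E(G)$ rather than at determining $N(i)$.
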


\begin{proof}
We first note that for $N=3$, $(2,2,2)$ is the only degree sequence meeting the conditions in (i). Since the corresponding HVG is $([3],\{12,13,23\})$, the claim follows in this case.
 
Let $N\geq 4$ and let $D=(d_1,\ldots,d_N)\in \mathbb{N}^N$ be the standard sequence of $G$. 
First assume that we are in situation (i). As $D$ is the standard sequence of $G$, we have $d_1=N>d_2$. If, by contradiction, $13\notin E(G)$, it follows that $d_1>d_2>d_3$. Let $3<m\leq N$ be minimal with $d_m>d_2$. Note that such $m$ exists since $\delta_1\geq 2$ implies the existence of $3<\ell\leq N$ with $1\ell\in E(G)$ and hence $d_\ell>d_2$. It follows that $2m\in E(G)$, a contradiction to $\delta_2=2$.

Now assume that the assumptions of (ii) are satisfied. We first show that there exists $i+1\le\ell\leq N$ with $(i-1)\ell\in E(G)$. This is clear if $i=3$ since $\delta_{2}\geq 3$. So let $i>3$. If there is no such edge, there has to exist an edge $j(i-1)$ with $1\leq j\leq i-3$. \Cref{lem:easy properties} (iii) implies that $G_{[j,i-1]}\in\Gc_{i-j,\neq}\setminus\{P_{i-j}\}$. As $i-j\geq 3$, we conclude with \Cref{lem: inner 2} that there exists an inner vertex $k$ of $G_{[i-j]}$ of degree $2$. In the following we choose $k$ minimal. \Cref{lem:easy properties} (i) together with the fact that $j(i-1)\in E(G)$ implies that $\delta_k=2$ also in $G$. By assumption, we further have $k\neq 2$ and the minimality of $k$ implies $\delta_{k-1}\geq 3$. Since $i$ was the minimal vertex of degree $2$ in $G$, we have hence reached a contradiction. Hence there exists $i+1\le\ell\leq N$ with $(i-1)\ell\in E(G)$. 
If $i=N-1$, we must have $\ell=i+1$ and the claim follows. If $i\neq N-1$, we must have that $d_{i-1}>d_i$. If, by contradiction, $(i-1)(i+1)\notin E(G)$, we conclude that $d_{i-1}>d_i>d_{i+1}$. The claim now follows by the same argument as in (i). 
\end{proof}
We want to point out that \Cref{lem: inner 2} guarantees that the degree sequence of any HVG in $\Gc_{N,\neq}$ either satisfies property (i) or (ii) of the previous lemma or is the one of the trivial HVG. As a consequence, it follows that for any $G\in \Gc_{N,\neq}\setminus\{P_N\}$ there exists $2\leq i\leq N$ with $\delta_i=2$ and $(i-1)(i+1)\in E(G)$. Moreover, the following example shows that it is important to choose $i$ minimally in (ii) since otherwise the statement is not necessarily true. 

\begin{exa} 
The $\HVG$ $G=\HVG(D)$ with $D=(1,8,4,7,6,5,2,3)$ (see \Cref{figure: exa_lem3_3}) has the ordered degree sequence $\Delta=(1,3,2,3,2,3,2,2)$. Vertex $5$ fulfills the assumptions of (ii) except for being minimal and $46\notin E(G)$. However, the minimal inner $2$ is at position $3$ and $24\in E(G)$.
\begin{figure}[h]
    \centering
    \includegraphics[width=10.0cm]{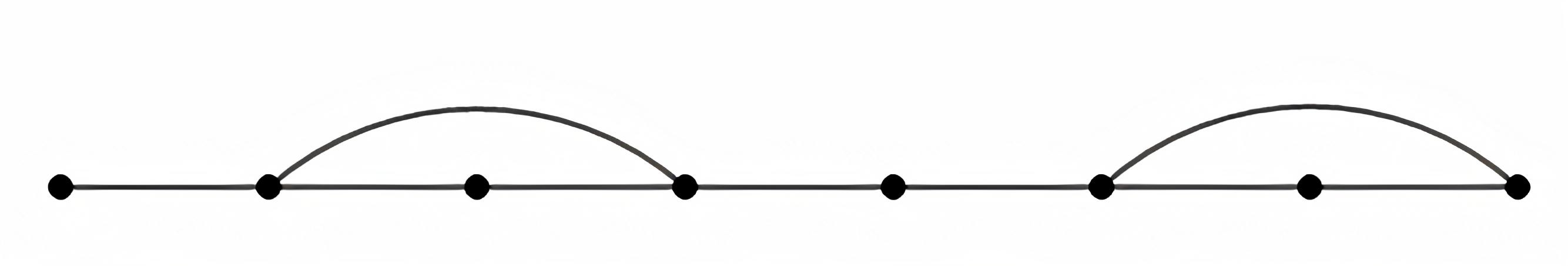}
    \caption{The graph $\HVG((1,8,4,7,6,5,2,3)).$}
    \label{figure: exa_lem3_3}
\end{figure}
\end{exa}

The next lemma shows the behavior of the set of degree sequences of HVGs in $\Gc_{N,\neq}$ with respect to the removal of certain inner $2$s. 
\begin{lem}\label{lem:remove degree 2 vertex}
Let $N\in \mathbb{N}$, $N\geq 3$, $G\in \Gc_{N,\neq}\setminus \{P_N\}$ and $D\in \mathbb{N}^N$ with $\HVG(D)=G$. Let $2\leq i\leq N-1$ with $\delta_i=2$ and $(i-1)(i+1)\in E(G)$ and let $D[i]\in \mathbb{N}^{N-1}$ denote the sequence obtained from $D$ by removing the $i$-th entry. Then $\HVG(D[i])=G\setminus \{i\}$ (after relabelling the vertices $i+1,\ldots,N$ of $G$ by $i,\ldots, N-1$). In particular, $G\setminus \{i\}\in\Gc_{N-1,\neq}$.
\end{lem}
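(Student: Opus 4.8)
The goal is to show that removing an inner vertex $i$ of degree $2$ whose neighbors $i-1$ and $i+1$ are adjacent produces again an HVG on $N-1$ vertices, realized by the sequence $D[i]$ obtained by deleting the $i$-th entry. The plan is to compare the edge sets of $\HVG(D[i])$ and $G \setminus \{i\}$ directly, using the defining visibility condition, and observing that deleting the $i$-th entry can only affect visibility pairs $uv$ with $u \le i-1$ and $v \ge i+1$. First I would set up notation: write $D = (d_1, \dots, d_N)$, and let $D[i] = (e_1, \dots, e_{N-1})$ where $e_j = d_j$ for $j < i$ and $e_j = d_{j+1}$ for $j \ge i$; correspondingly relabel the vertices of $G \setminus \{i\}$ so that old vertex $j > i$ becomes $j - 1$. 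Edges of $G$ among vertices $\le i-1$ and edges among vertices $\ge i+1$ are clearly unchanged in both graphs, so the only thing to check is the status of pairs $uv$ with $u \le i-1 < i+1 \le v$ (in the new labeling, $u \le i-1$ and $v \ge i$).

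\textbf{Key steps.} The crucial observation is that since $\delta_i = 2$ and $(i-1)(i+1) \in E(G)$, the defining condition for the edge $(i-1)(i+1)$ forces $d_i < \min(d_{i-1}, d_{i+1})$; moreover since $\deg(i) = 2$, vertex $i$ sees exactly $i-1$ and $i+1$ and nothing else, which is automatic once $d_i$ is smaller than both neighbors. Now for a pair $u \le i-1$, $v \ge i+1$ with $u < i-1$ or $v > i+1$ (i.e.\ not the pair $(i-1)(i+1)$ itself): in $G$, the visibility condition requires $d_u > d_k < d_v$ for all $u < k < v$; in particular taking $k = i$ we would need $d_i < d_u$ and $d_i < d_v$, but also $k = i-1$ and $k = i+1$ among the intermediate indices. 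In $\HVG(D[i])$, the corresponding pair (relabeled) has intermediate indices exactly those of the original minus the index $i$. So I would argue: $uv \in E(G)$ with the relevant ranges $\iff$ $d_u > d_k < d_v$ for all $u < k < v$ $\iff$ $d_u > d_k < d_v$ for all $u < k < v$, $k \ne i$ (because the condition at $k = i$ is implied by the conditions at $k = i-1$ and $k = i+1$ together with $d_i < \min(d_{i-1}, d_{i+1})$ — here I use that $u \le i-1$ and $v \ge i+1$, so both $i-1$ and $i+1$ are among the intermediate indices whenever $u < i-1$ or $v > i+1$, and $d_i < d_{i-1} \le$ well, more carefully: $d_i < d_{i-1}$ and if $u < i-1$ then the condition at $k=i-1$ gives $d_u > d_{i-1} > d_i$, and similarly on the right) $\iff$ the relabeled pair is an edge of $\HVG(D[i])$. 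Separately I would handle the trivial path edges $j(j+1)$ which lie in every HVG and are clearly preserved. Finally, since $G \in \Gc_{N,\neq}$ implies $D$ has distinct entries, so does $D[i]$, giving $G \setminus \{i\} = \HVG(D[i]) \in \Gc_{N-1,\neq}$.

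\textbf{Main obstacle.} The one place needing genuine care is the equivalence "condition at $k = i$ is redundant". This needs the hypothesis $(i-1)(i+1) \in E(G)$ — equivalently $d_i < \min(d_{i-1}, d_{i+1})$ — together with the fact that for the pairs $uv$ we care about, both $i-1$ and $i+1$ are intermediate indices (or coincide with $u$ resp.\ $v$): if $u = i-1$ then $v > i+1$ and $i+1$ is intermediate, and the condition at $k = i+1$ reads $d_u = d_{i-1} > d_{i+1} > d_i$; symmetrically if $v = i+1$. If both $u = i-1$ and $v = i+1$ we are looking at the edge $(i-1)(i+1)$ itself, which becomes the path edge between the two relabeled adjacent vertices and is trivially in $\HVG(D[i])$. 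So in all cases the $k = i$ inequality $d_i < d_u$, $d_i < d_v$ follows from a neighboring intermediate-index condition plus $d_i < d_{i-1}$, $d_i < d_{i+1}$. Assembling these cases carefully is the bulk of the argument; everything else is bookkeeping about relabeling indices.
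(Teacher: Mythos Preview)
Your proposal is correct and follows essentially the same approach as the paper's proof: both split the comparison of edge sets into edges lying entirely to one side of $i$ (trivially preserved) and edges $uv$ with $u\le i-1<i+1\le v$, and both use the key fact $d_i<\min(d_{i-1},d_{i+1})$ to argue that the visibility condition at the intermediate index $k=i$ is redundant. Your case analysis for this redundancy is in fact slightly more explicit than the paper's, which simply asserts the equivalence; otherwise the arguments are the same.
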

\begin{proof}
As $G\in\Gc_{N,\neq}$ we may assume that all entries of $D$ are distinct. Since $(i-1)(i+1)\in E(G)$, we must have $d_{i-1}>d_i<d_{i+1}$. Let $D[i]\in \mathbb{N}^{N-1}$ be the sequence obtained from $D$ by removing $d_i$ and let $H=\HVG(D[i])$. We claim that $H$ equals $G\setminus \{i\}$ (up to relabelling the vertices of $G\setminus \{i\}$ with $1,\ldots,N-1$). Clearly, $H_{[i-1]}=(G\setminus \{i\})_{[i-1]}$ and $H_{[i,N-1]}=(G\setminus \{i\})_{[i+1,N]}$. Moreover, as $(i-1)~(i+1)\in E(G)$, we also have $H_{[i-1,i]}=(G\setminus \{i\})_{\{i-1,i+1\}}$. Now assume that $1\leq j\leq i-1<\ell\leq N-1$ with $\{j,\ell\}\neq\{i-1,i+1\}$. Then $j\ell\in E(G\setminus \{i\})$ if and only if $d_j>d_k<d_\ell$ for all $j<k<\ell$. As $d_{i-1}>d_i<d_{i+1}$ this is equivalent to $d_j>d_k<d_\ell$ for all $j<k<\ell$ with $k\neq i$, i.e., $j~(\ell-1)\in E(H)$. This completes the proof.  
\end{proof}
We now prove the main result of this subsection, showing that HVGs in $\Gc_{N,\neq}$ are uniquely determined by their ordered degree sequence.\\

\noindent{\sf Proof of \Cref{thm: degree sequence determines HVG} }
We show the claim by induction on $N$. If $N\in\{1,2\}$, then $\Gc_{N,\neq}=\{P_N\}$ and the statement is trivially true. Let $N\geq 3$, $\Delta=(\delta_1,\ldots,\delta_N)\in\mathbb{N}^N$. If $\Delta=(1,2,\ldots,2,1)$, we clearly have $\Delta(P_N)=\Delta$ and as $P_N\subsetneq G$ for any $G\in \Gc_{N,\neq}$, the claim follows in this case. Let $\Delta\neq (1,2,\ldots,2,1)$. Assume there exists $G,H\in \Gc_{N,\neq}$ with $\Delta(G)=\delta=\Delta(H)$. Let $2\leq i\leq N-1$ be minimal such that $\delta_i=2$ and $(i-1)(i+1)\in E(G)\cap E(H)$. Note that such $i$ exists due to \Cref{lem: first 2}.   \Cref{lem:remove degree 2 vertex} implies that $G\setminus \{i\}, H\setminus \{i\}\in\Gc_{N,\neq}$ and, as those graphs have the same degree sequence $\widetilde{\delta}=(\delta_1,\ldots,\delta_{i-2},\delta_{i-1}-1,\delta_{i+1}-1,\delta_{i+2},\ldots,\delta_N)$, the induction hypothesis yields $G\setminus \{i\}=H\setminus \{i\}$. As $i-1$ and $i+1$ are the only neighbors of $i$ in both $G$ and $H$, we conclude that $G=H$. \hfill$\qed$

\begin{rem}\label{rem: degree algo}
The proof of \Cref{thm: degree sequence determines HVG} can easily be turned into an algorithm to construct the unique HVG $G\in \Gc_{N,\neq}$ with a given ordered degree sequence $\Delta$. More precisely, one successively removes the minimal inner $2$ satisfying (i) or (ii) of \Cref{lem: first 2} from $\Delta$ and decreases the two neighboring entries by $1$. We note that for each removal the length of the sequence decreases by $1$. If the $i$-th entry is removed, one protocols the two edges corresponding to the removal (see \Cref{lem:remove degree 2 vertex}). In this way, one finally reaches a sequence of the form $(1,2,\ldots,2,1)$, where also no inner $2$ is possible. If the $2$s have been obtained from the original entries at positions $j_1,\ldots,j_k$, one needs to add the edges $1j_1,j_1j_2,\ldots,j_{k-1}j_k,j_kN$ to the list of edges. We illustrate this procedure in an example.
\end{rem}

\begin{exa}
We consider the ordered degree sequence $\Delta=(2,3,2,5,2,2)$ and construct the corresponding HVG $G$ as follows. We use $E_i$ and $\Delta_i$ to denote the edge set and the changed degree sequence after the removal of $i$ inner $2$s.
\begin{itemize}
    \item We first remove the $2$ at position $3$, which yields $\Delta_1=(2,2,4,2,2)$ and $E_1=\{23,34\}$.
    \item We remove the $2$ at position $2$ and get $\Delta_2=(1,3,2,2)$ and the new edges $12$ and $24$ since what is now vertex $3$ was the original vertex $4$.
    \item We remove the $2$ at position $3$ (which was the original position $5$ which yields $\Delta_3=(1,2,1)$ and $E_3=\{23,34,12,24,45,56\}$.
\item In the last step, we add the edges $14$ and $46$ since the inner $2$ in $\Delta_3$ was obtained from the original vertex $4$.
\end{itemize}
The obtained graph is shown in \Cref{fig: HVG_432615}.
\begin{figure}[h]
    \centering
    \includegraphics[width=7.0cm]{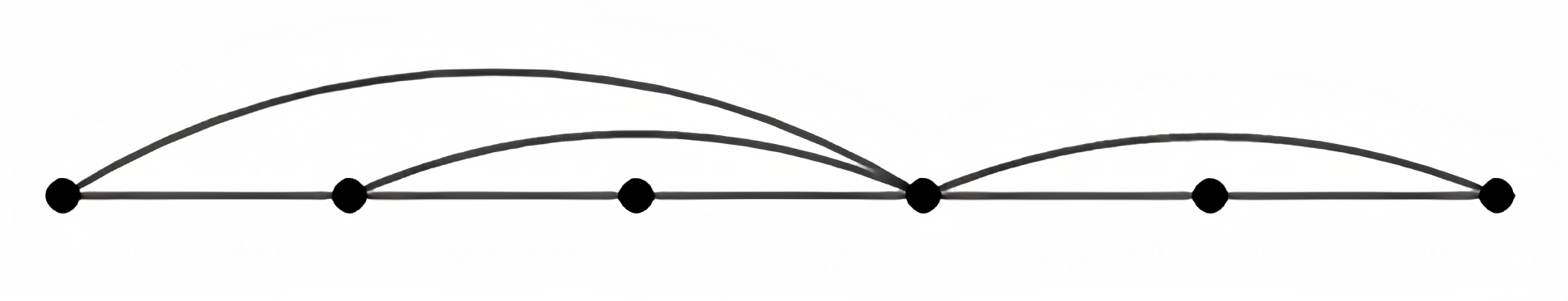}
    \caption{The unique HVG with ordered degree sequence $(2,3,2,5,2,2)$}
    \label{fig: HVG_432615}
\end{figure}
\end{exa}

Having \Cref{thm: degree sequence determines HVG} in mind it is natural to ask if this result can be generalized to arbitrary HVGs without restricting to data sequences with pairwise distinct entries. It can be verified computationally that this is possible for $N\leq 6$, i.e., any HVG in $\Gc_N$ is uniquely determined by its ordered degree sequence. However, already for $N=7$ this breaks down, since there exist $394$ HVGs compared to $391$ ordered  degree sequences. An instance of two HVGs with the same ordered degree sequence is shown in \Cref{figure: exam_same_degree}.
\begin{figure}[h]
	\centering
	\begin{subfigure}[b]{.45\textwidth}
        \centering
        \includegraphics[width=7.0cm]{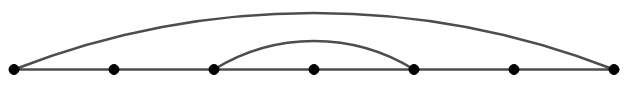}
        \caption{$\HVG((3,2,2,1,2,2,3))$}
        \label{fig: HVG_3221223}
	\end{subfigure}\hspace{5mm}%
	\begin{subfigure}[b]{.45\textwidth}
        \centering
        \includegraphics[width=7.0cm]{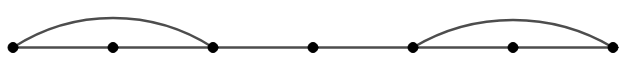}
        \caption{$\HVG((2,1,2,2,2,1,2))$}
        \label{fig: HVG_2122212}
	\end{subfigure}
	\caption{Two HVGs with the same ordered degree sequence $\Delta=(2,2,3,2,3,2,2)$.}
	\label{figure: exam_same_degree}
\end{figure}

We also want to point out that \Cref{thm: degree sequence determines HVG} cannot be generalized to unordered degree sequences since it is easy to construct examples of different HVGs with the same unordered degree sequence. For instance, the sequences $(4,1,2,3)$ and $(4,2,1,3)$ yield different HVGs having the same (unordered) degree sequence $(2,2,3,3)$ (see also \cite{Mansour}).

\subsection{Counting HVGs in $\Gc_{N,\neq}$~--~Catalan numbers}\label{sect: HVGs and Catalan}
The aim of this subsection is to prove \Cref{thm: Catalan numbers} (i). Namely, to show that HVGs in $\Gc_{N,\neq}$ are counted by the $(N-1)$-st Catalan number $C_{N-1}$ (see \cite{StanleyCatalan} for the numerous interpretations of these). 
We first introduce some notation. For $N,s\in \mathbb{N}$ with $2\leq s\leq N$, let 
$$
	\Gc_{N,\neq}^s=\left\{G\in \Gc_{N,\neq}~:~m_G(1)=s\right\}.
	$$
	Obviously, we have $|\Gc_{N,\neq}|=\sum_{s=2}^N|\Gc_{N,\neq}^s|$. We start by providing a relation between $|\Gc_{N,\neq}^s|$, $|\Gc_{s,\neq}^s|$ and $|\Gc_{N-s+1,\neq}|$.
	
\begin{lem}\label{lem: g_{n,s}}
	Let $N,s\in\N$ with $2\leq s\leq N$. Then 
	\begin{equation*}
|\Gc_{N,\neq}^s|= |\Gc_{s,\neq}^s|\cdot |\Gc_{N-s+1,\neq}|
	\end{equation*}
\end{lem}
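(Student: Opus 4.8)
The plan is to decompose a graph $G \in \Gc_{N,\neq}^s$ along the vertex $s = m_G(1)$, which by \Cref{lem:easy properties} (ii) is non-nested. First I would argue that $s$ being non-nested together with the non-crossing property (\Cref{lem:easy properties} (i)) forces $G$ to split as a $1$-sum $G = G_1 + G_2$ where $G_1 = G_{[1,s]}$ and $G_2 = G_{[s,N]}$ (after relabelling), i.e.\ there are no edges $uv \in E(G)$ with $u < s < v$. Indeed, suppose such an edge existed with $u$ minimal and $v$ maximal; since $1s \in E(G)$ and $G$ is non-crossing, one checks that this edge would have to be nested under $1s$ or would cross it, and in either case one derives that $s$ is nested or that $1$ has a neighbor beyond $s$, contradicting $m_G(1) = s$. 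The relevant point is that $m_G(1) = s$ means $1$ sees nothing past $s$, and any edge straddling $s$ would, by the horizontal-visibility defining condition applied together with non-crossingness, be incompatible with this.

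Next I would observe that this splitting is compatible with the class membership: by \Cref{lem:easy properties} (iii), $G_{[1,s]} \in \Gc_{s,\neq}$ and $G_{[s,N]} \in \Gc_{N-s+1,\neq}$, and since $1s \in E(G)$ we have $m_{G_{[1,s]}}(1) = s$, so $G_{[1,s]} \in \Gc_{s,\neq}^s$. Conversely, given any pair $(G_1, G_2) \in \Gc_{s,\neq}^s \times \Gc_{N-s+1,\neq}$, \Cref{lem:operations} (iii) guarantees that $G_1 + G_2 \in \Gc_{N,\neq}$, and one must check $m_{G_1 + G_2}(1) = s$: the edge $1s$ survives because it is in $G_1$, and $1$ acquires no new neighbors past $s$ because in the $1$-sum there are by construction no edges straddling the gluing vertex. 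Thus $G_1 + G_2 \in \Gc_{N,\neq}^s$. These two constructions, $G \mapsto (G_{[1,s]}, G_{[s,N]})$ and $(G_1, G_2) \mapsto G_1 + G_2$, are mutually inverse: applying the splitting to a $1$-sum recovers the factors by \Cref{lem:easy properties} (iii)/(vi)-type reasoning, and forming the $1$-sum of the two pieces of a split graph recovers $G$ because $G$ had no straddling edges. This gives a bijection $\Gc_{N,\neq}^s \cong \Gc_{s,\neq}^s \times \Gc_{N-s+1,\neq}$, hence the claimed product formula for cardinalities.

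The main obstacle I anticipate is the first step: rigorously proving that $m_G(1) = s$ forces $G$ to have no edge straddling $s$. The non-crossing property alone does not obviously prevent an edge $uv$ with $u < s < v$ (such an edge could in principle be "nested above" without crossing $1s$ if $u > 1$... but then $1s$ and $uv$ with $1 < u < s < v$ do cross, so actually one only needs to rule out $u = 1$, which is immediate from $m_G(1) = s < v$). So the argument is cleaner than it first appears: \emph{any} edge $uv$ with $u < s < v$ has either $u = 1$ (impossible since $m_G(1) = s$) or $1 < u$, in which case $1 < u < s < v$ exhibits a crossing with $1s \in E(G)$, contradicting \Cref{lem:easy properties} (i). So the only genuinely careful point is verifying the two maps are inverse to each other and that membership in the subclasses is preserved in both directions, both of which reduce to the already-established \Cref{lem:easy properties} (iii), (vi) and \Cref{lem:operations} (iii). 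I would also double-check the edge case $s = N$: then $\Gc_{N-s+1,\neq} = \Gc_{1,\neq} = \{P_1\}$ has one element, and the formula correctly reads $|\Gc_{N,\neq}^N| = |\Gc_{N,\neq}^N|$.
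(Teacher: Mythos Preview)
Your proposal is correct and follows essentially the same approach as the paper: both establish a bijection $\Gc_{N,\neq}^s \leftrightarrow \Gc_{s,\neq}^s \times \Gc_{N-s+1,\neq}$ via $G \mapsto (G_{[1,s]}, G_{[s,N]})$ and $(G_1,G_2) \mapsto G_1 + G_2$, invoking \Cref{lem:easy properties} (i), (iii) and \Cref{lem:operations} (iii) at the same points. The paper states the ``no straddling edge'' step more tersely (``\Cref{lem:easy properties} (i), combined with $1s\in E(G)$, implies \ldots''), but the content is exactly the clean two-case argument you arrive at in your final paragraph; your initial detour through non-nestedness of $s$ is unnecessary but harmless.
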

\begin{proof}
Let $G\in \Gc_{N,\neq}^s$. It follows from \Cref{lem:easy properties} (iii) and the fact that $1s\in E(G)$ that $G_{[s]}\in \Gc_{s,\neq}^s$ and $G_{[s,N]}\in\Gc_{N-s+1,\neq}$. Since \Cref{lem:easy properties} (i), combined with $1s\in E(G)$, implies that $G$ does not have edges between vertices in $[s-1]$ and vertices in $[s+1,N]$, it holds that $G=G_{[s]}+G_{[s,N]}$, i.e., $G$ is uniquely determined by $G_{[s]}$ and $G_{[s,N]}$ and hence, $|\Gc_{N,\neq}^s|\leq|\Gc_{s,\neq}^s|\cdot |\Gc_{N-s+1,\neq}|$.

Conversely, let $G\in \Gc_{s,\neq}^s$, $H\in \Gc_{N-s+1,\neq}$. \Cref{lem:operations} (iii), together with $1s\in E(G)$ implies that $G+H\in \Gc_{N,\neq}^s$. Since $G=(G+H)_{[s]}$ and $H=(G+H)_{[s,N]}$, $G$ and $H$ are uniquely determined by $G+H$ and it follows that  $|\Gc_{N,\neq}^s|\geq|\Gc_{s,\neq}^s|\cdot |\Gc_{N-s+1,\neq}|$.
This finishes the proof.
\end{proof}

The next lemma will be crucial to count the graphs in $\Gc_{s,\neq}^s$.
	\begin{lem}\label{lem: connected to nonnested vertices}
	Let $G\in \Gc_{s,\neq}^s$. Then $N(s)=\Nc(G_{[s-1]})$.
	\end{lem}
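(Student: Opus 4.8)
The plan is to show both inclusions $N(s)\subseteq \Nc(G_{[s-1]})$ and $\Nc(G_{[s-1]})\subseteq N(s)$, using the standard sequence from \Cref{thm:dataGraph} as the main computational tool. Let $D=(d_1,\dots,d_s)\in [s]^s$ be the standard sequence of $G$, so that $d_1=s$ is the largest entry (recall $G\in\Gc_{s,\neq}^s$ means $m_G(1)=s$, which since $d_1=s$ forces $d_s$ to be the second-largest value, i.e.\ $d_s$ is the unique maximum of $d_2,\dots,d_s$, and in fact $d_s=s-|\Nc(G)|+1$ as computed in the proof of \Cref{lem: inner 2}). The key point is that the edge $1s\in E(G)$ means every vertex $k$ with $1<k<s$ is nested in $G$, and, crucially, for the subgraph $G_{[s-1]}$ the edge $1s$ is gone, so nestedness in $G_{[s-1]}$ is a strictly stronger condition than in $G$.

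First I would prove $\Nc(G_{[s-1]})\subseteq N(s)$. Suppose $j\in\Nc(G_{[s-1]})$, i.e.\ no edge $uv\in E(G_{[s-1]})$ has $u<j<v$ with $1\le u<v\le s-1$. I want to show $js\in E(G)$, equivalently $d_k<\min(d_j,d_s)=d_j$ for all $j<k<s$ (using that $d_s$ is the max of $d_2,\dots,d_s$, so $\min(d_j,d_s)=d_j$ when $j\ge 2$; the case $j=1$ gives $1s\in E(G)$ directly). If $js\notin E(G)$, there is some $j<k<s$ with $d_k>d_j$; take $k$ maximal with this property among indices in $(j,s)$. Then as in Case 1 of the proof of \Cref{thm:dataGraph}, any nesting edge $uv$ of $k$ in $G$ satisfies $u<k$ and $v\ge s$; but since $G_{[s-1]}$ realizes $\HVG(D[s])$-type behavior and $k$ must actually be reached by an edge from the left staying inside $[s-1]$ — here I would invoke \Cref{lem:easy properties} and the non-crossing property to locate an edge $uv\in E(G_{[s-1]})$ with $u<j<k\le v\le s-1$, contradicting $j\in\Nc(G_{[s-1]})$. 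The technical content is exactly the argument already carried out in \Cref{thm:dataGraph}'s proof, restricted to the interval $[s-1]$.

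Conversely, for $N(s)\subseteq \Nc(G_{[s-1]})$: suppose $js\in E(G)$ with $j\le s-1$; I claim $j\in\Nc(G_{[s-1]})$. If not, there is an edge $uv\in E(G_{[s-1]})\subseteq E(G)$ with $1\le u<j<v\le s-1$. But then $uv$ and $js$ are crossing in $G$ (since $u<j<v<s$), contradicting \Cref{lem:easy properties} (i). This direction is immediate. Combining the two inclusions gives $N(s)=\Nc(G_{[s-1]})$.

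The main obstacle I anticipate is the first inclusion: pinning down precisely why the "witness" vertex $k$ (the maximal index in $(j,s)$ with $d_k>d_j$) must be covered by a nesting edge that lies entirely within $[s-1]$ rather than one using the vertex $s$ itself. The subtlety is that $s$ has the globally maximal value among $d_2,\dots,d_s$, so one must argue that the edge certifying $k$'s position cannot be $us$ for some $u<j$ — but if it were, then $us\in E(G)$ together with $js\in$(attempted)$E(G)$ and $u<j$ would be fine for crossing, so instead one uses that $us\in E(G)$ with $u<j<k<s$ forces $d_\ell<d_u$ and in particular this constrains $d_j$; careful bookkeeping of which entries dominate which, exactly mirroring the standard-sequence proof, resolves it. I would handle this by essentially quoting the relevant case analysis of \Cref{thm:dataGraph} verbatim, applied to $G_{[s-1]}$, which by \Cref{lem:easy properties} (iii) is itself an HVG in $\Gc_{s-1,\neq}$.
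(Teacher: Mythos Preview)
Your overall two-inclusion strategy matches the paper, and the direction $N(s)\subseteq\Nc(G_{[s-1]})$ via non-crossing (an edge $uv$ with $u<j<v\le s-1$ would cross $js$) is correct and essentially what the paper does.

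The other direction, however, is tangled. You choose $k$ \emph{maximal} in $(j,s)$ with $d_k>d_j$ and then analyze nesting edges of $k$; your own argument correctly shows every nesting edge of $k$ has right endpoint $s$, which is precisely the \emph{wrong} conclusion for what you need: the goal is to exhibit a nesting edge of $j$ lying entirely inside $[s-1]$, not a nesting edge of $k$. The appeal to ``the relevant case analysis of \Cref{thm:dataGraph}'' does not fill this gap, since that proof compares nesting degrees rather than producing an explicit edge inside $[s-1]$.

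The paper's argument is shorter and does not use the standard sequence at all. Take any realizing $D$ with pairwise distinct entries. If $j\in\Nc(G_{[s-1]})$ and $js\notin E(G)$, then (since $1s\in E(G)$ rules out $d_k>d_s$) there is some $j<k<s$ with $d_k>d_j$; let $v$ be the \emph{minimal} such index. Let $u<j$ be maximal with $d_u>d_j$ (this exists because $d_1>d_j$). Then every entry strictly between $u$ and $v$ is below $d_j$, so $uv\in E(G)$, and since $u<j<v\le s-1$ this edge lies in $G_{[s-1]}$ and nests $j$, a contradiction. Swapping your ``maximal $k$'' for this minimal--right/maximal--left pair is the missing move.
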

	
	\begin{proof}
Let $D\in \mathbb{N}^s$ with $\HVG(D)=G$ and pairwise distinct entries. We first show that   $N(s)\subseteq\Nc(G_{[s-1]})$. To this end, let $\ell\in N(s)$, i.e., $\ell s\in E(G)$. Since in this case we must have $d_\ell>d_i<d_s$ for all $\ell<i<s$, there is no $uv\in E(G)$ with $1\leq u<\ell<v\leq s-1$. Hence, $\ell\in \Nc(G_{[s-1]})$.
For the reverse containment, consider $\ell\in \Nc(G_{[s-1]})$. The statement is trivially true for $\ell=1$ and $\ell=s-1$. For $\ell\in \Nc(G_{[s-1]})\setminus\{1,s-1\}$ assume by contradiction that $\ell s\notin E(G)$. Then there exists $\ell<j<s$ with $d_j>d_\ell$ or $d_j>d_s$. As $1s\in E(G)$, the latter case never occurs and therefore we must have $d_j>d_\ell$. In the following, we assume that $j$ is minimal with this property. Similarly, let $1\leq k<\ell$ maximal such that $d_k>d_\ell$. Note that such $k$ exists since $d_1>d_\ell$. It then follows that $kj\in E(G)$ and hence $kj\in E(G_{[s-1]})$ which implies $\ell\notin \Nc(G_{[s-1]})$, a contradiction. 
	\end{proof}
	
We want to remark that the same proof as above shows that \Cref{lem: connected to nonnested vertices} holds more generally for $G\in \Gc_{N,\neq}^s$. However, we do not need the statement in such generality. 
On the other hand, \Cref{lem: connected to nonnested vertices} does not generalize to arbitrary HVGs (see \Cref{exa: easy properties} for an example).
	
The next lemma is the last ingredient we need for the proof of \Cref{thm: Catalan numbers} (i).
	\begin{lem}\label{lem: g_{s,s}}
	Let $N\in\mathbb{N}$, $N\geq 2$. Then
	\begin{equation*}
	|\Gc_{N,\neq}^N|=|\Gc_{N-1,\neq}|.
	\end{equation*}
	\end{lem}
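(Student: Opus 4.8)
The plan is to construct an explicit bijection between $\Gc_{N,\neq}^N$ and $\Gc_{N-1,\neq}$. The natural candidate is the map $G\mapsto G_{[N-1]}=G\setminus\{N\}$, which by \Cref{lem:easy properties} (iii) does land in $\Gc_{N-1,\neq}$, so the real content is that this map is a bijection. I would show injectivity and surjectivity separately, with the key structural input being \Cref{lem: connected to nonnested vertices}, which says that for $G\in\Gc_{N,\neq}^N$ the neighborhood $N(N)$ is forced to equal $\Nc(G_{[N-1]})$. Thus $G$ is completely recovered from $G_{[N-1]}$: the edges among $[N-1]$ are those of $G_{[N-1]}$, and the edges incident to $N$ are exactly the pairs $\{\ell,N\}$ with $\ell\in\Nc(G_{[N-1]})$. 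This immediately gives injectivity of the restriction map.

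For surjectivity, given an arbitrary $H\in\Gc_{N-1,\neq}$, I would form the graph $G$ on vertex set $[N]$ by taking all edges of $H$ together with the edges $\{\ell,N\}$ for each $\ell\in\Nc(H)$, and I must check that $G\in\Gc_{N,\neq}^N$. One clean way to do this: pick a realizing permutation $D=(d_1,\dots,d_{N-1})$ of $[N-1]$ with $\HVG(D)=H$, using \Cref{lem:easy properties} (v), and then append a new entry; concretely, consider $D'=(d_1+1,\dots,d_{N-1}+1,1)\in\mathbb N^N$ — wait, that makes $N$ a global minimum, which is wrong. Instead I would shift and append a global \emph{maximum}: reading the desired edges incident to $N$ (namely the non-nested vertices of $H$, which form a "descending then ascending" skeleton by \Cref{lem:easy properties} (iv)), the correct construction is to take $D'=(d_1,\dots,d_{N-1},N)$ — but $N$ then sees back only to the last non-nested vertex of $H$ and everything visible from the right end. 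The cleaner route is to observe that $\Nc(H)=\{i_1<\dots<i_k\}$ with $i_1=1$, $i_k=N-1$, and to build the standard-type realization directly: by \Cref{lem:operations} (ii) applied repeatedly (or rather its spirit), adding a new vertex $N$ that is non-crossing-compatible and adjacent exactly to $\Nc(H)$ produces a genuine HVG. Since $N$ being adjacent to every non-nested vertex of $H$ and to nothing else keeps the graph non-crossing (the new edges nest around all old ones and the consecutive non-nested vertices are mutually adjacent by \Cref{lem:easy properties} (iv)), the Remark following \Cref{lem:operations} gives $G\in\Gc_N$, and distinctness of entries can be arranged, so $G\in\Gc_{N,\neq}$; moreover $1\in\Nc(H)$ forces $1N\in E(G)$, hence $m_G(1)=N$ and $G\in\Gc_{N,\neq}^N$. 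Finally $G_{[N-1]}=H$ by construction.

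The main obstacle I anticipate is the surjectivity step: one must carefully exhibit, for each $H\in\Gc_{N-1,\neq}$, an actual data sequence (with distinct entries) on $N$ points whose HVG has edge set exactly $E(H)\cup\{\ell N:\ell\in\Nc(H)\}$, and verify no spurious edges appear. The subtlety is ensuring vertex $N$ "sees" precisely the non-nested vertices of $H$ and no nested one — this is exactly the converse direction already handled in the proof of \Cref{lem: connected to nonnested vertices}, so I would either cite that argument or replicate it: taking $D$ with $d_1=N-1$ realizing $H$ and setting $D'=(d_1,\dots,d_{N-1},N)$, a vertex $\ell<N$ satisfies $\ell N\in E(\HVG(D'))$ iff $d_\ell>d_i$ for all $\ell<i<N$, which holds iff $\ell$ is a right-to-left maximum of $D$, and these are exactly the non-nested vertices of $H$ (a fact implicit in \Cref{lem: connected to nonnested vertices} and its proof). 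Combining injectivity and surjectivity yields $|\Gc_{N,\neq}^N|=|\Gc_{N-1,\neq}|$.
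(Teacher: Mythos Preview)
Your approach is essentially the paper's: both show that $\Phi:\Gc_{N,\neq}^N\to\Gc_{N-1,\neq}$, $G\mapsto G_{[N-1]}$, is a bijection, with injectivity coming from \Cref{lem: connected to nonnested vertices} and surjectivity from appending a new global maximum $N$ to a realizing sequence of $H$ whose first entry is maximal (the paper uses the standard sequence of \Cref{thm:dataGraph}, you invoke \Cref{lem:easy properties}~(v)). One small streamlining: for surjectivity you only need $G_{[N-1]}=H$ and $1N\in E(G)$, both of which are immediate from $D'=(d_1,\dots,d_{N-1},N)$ with $d_1=N-1$; determining the full neighborhood of $N$ is unnecessary here, since that is precisely what \Cref{lem: connected to nonnested vertices} already handles on the injectivity side.
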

\begin{proof}
We show the claim by proving that 
$$
\Phi:\Gc_{N,\neq}^N\to \Gc_{N-1,\neq}: G\mapsto G_{[N-1]}$$ 
is a bijection. By \Cref{lem:easy properties} (iii) the map $\Phi$ is well-defined and it directly follows from \Cref{lem: connected to nonnested vertices} that $\Phi$ is injective. To show surjectivity, let $H\in \Gc_{N-1,\neq}$ and let $D=(d_1,\ldots,d_{N-1})\in\mathbb{N}^{N-1}$ be the standard sequence of $H$. Since all entries of $D$ are distinct, at most $N-1$ and $d_1=N-1$ it follows that $G=\HVG((d_1,\ldots,d_{N-1},N))\in \Gc_{N,\neq}^N$. Since clearly  $\Phi(G)=H$, we conclude that $\Phi$ is surjective. 
\end{proof}
Finally, we can provide the proof of \Cref{thm: Catalan numbers} (i).

\noindent{\sf Proof of \Cref{thm: Catalan numbers} (i)}
We show the claim via induction. For $N\in\{1,2\}$ there exists exactly one graph in $\Gc_{N,\neq}$ and since $C_0=C_1=1$, the claim is trivially true in this case.
Let $N\geq 2$. We then have
\begin{align*}
|\Gc_{N,\neq}|=&\sum_{s=2}^N |\Gc_{N,\neq}^s|=\sum_{s=2}^N |\Gc_{s,\neq}^s|\cdot |\Gc_{N-s+1,\neq}|\\
=&\sum_{s=2}^N|\Gc_{s-1,\neq}|\cdot |\Gc_{N-s+1,\neq}|\\
=& \sum_{s=2}^NC_{s-2}\cdot C_{N-s}=\sum_{s=0}^{N-2}C_{s}\cdot C_{N-s}=C_{N-1},
\end{align*}
where the second, third, fourth and sixth equality follow from \Cref{lem: g_{n,s}}, \Cref{lem: g_{s,s}}, the induction hypothesis and Segner's recurrence formula for the Catalan numbers, respectively. 
\qed

We end this subsection with an identity for the Catalan numbers, which we stumbled over in our study of HVGs but which we were unable to find in the literature. 

\begin{prop}\label{prop: Catalan identity}
Let $N\in \mathbb{N}$. Then
\begin{equation*}
C_{N}=1+\sum_{k=3}^{N+1}\frac{3}{2k-3}\binom{2k-3}{k}.
\end{equation*}
\end{prop}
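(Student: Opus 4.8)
The plan is to recognize each summand as a difference of two consecutive Catalan numbers and then telescope. Write $S_N$ for the right-hand side, i.e.\ $S_N = 1 + \sum_{k=3}^{N+1}\frac{3}{2k-3}\binom{2k-3}{k}$. For $N\in\{0,1\}$ the sum is empty, so $S_N = 1 = C_N$; and for $N\ge 2$ the increment $S_N - S_{N-1}$ is exactly the top summand $\frac{3}{2N-1}\binom{2N-1}{N+1}$. Thus it suffices to prove the pointwise identity
\[
\frac{3}{2k-3}\binom{2k-3}{k} \;=\; C_{k-1}-C_{k-2}\qquad\text{for all }k\ge 3 ,
\]
since then $S_N - S_{N-1} = C_N - C_{N-1}$ for every $N\ge 2$, which together with $S_1 = C_1$ telescopes to $S_N = C_N$.

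To establish this pointwise identity I would write both sides as the same rational multiple of the central binomial coefficient $\binom{2k-2}{k-1}$. On the Catalan side, $C_{k-1} = \frac{1}{k}\binom{2k-2}{k-1}$, and from $\binom{2k-2}{k-1} = \frac{2(2k-3)}{k-1}\binom{2k-4}{k-2}$ one obtains $C_{k-2} = \frac{1}{k-1}\binom{2k-4}{k-2} = \frac{1}{2(2k-3)}\binom{2k-2}{k-1}$; subtracting gives
\[
C_{k-1}-C_{k-2} \;=\; \Bigl(\tfrac{1}{k}-\tfrac{1}{2(2k-3)}\Bigr)\binom{2k-2}{k-1} \;=\; \frac{3(k-2)}{2k(2k-3)}\binom{2k-2}{k-1}.
\]
On the other side, a short factorial computation yields $\binom{2k-3}{k} = \binom{2k-3}{k-3} = \frac{k-2}{2k}\binom{2k-2}{k-1}$, so that $\frac{3}{2k-3}\binom{2k-3}{k} = \frac{3(k-2)}{2k(2k-3)}\binom{2k-2}{k-1}$ as well. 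The two expressions coincide, which finishes the proof.

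The only real point is to notice that the summand equals $C_{k-1}-C_{k-2}$; this is immediate from its first few values $1,3,9,28,\dots$, which are the successive Catalan differences. Once that is seen there is no genuine obstacle: the remaining work is a routine rearrangement of binomial coefficients together with a telescoping sum. (One could also deduce the pointwise identity from the functional equation $C(x)=1+xC(x)^2$ for the ordinary generating function of the Catalan numbers, but the elementary computation above is shorter.)
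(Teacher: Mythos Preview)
Your proof is correct and is essentially the same argument as the paper's: the paper proceeds by induction on $N$, and the inductive step amounts precisely to your pointwise identity $\frac{3}{2N-1}\binom{2N-1}{N+1}=C_N-C_{N-1}$, which the paper verifies by the same kind of factorial manipulation you carry out. Your telescoping presentation makes the structure a bit more transparent, but the content is identical.
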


\begin{proof}
We prove the statement via induction. Since $C_0=C_1=1$, the statement is trivially true for $N\in \{0,1\}$. Now let $N\geq 2$. In this case, we have
\begin{eqnarray*}
1+\sum_{k=3}^{N+1}\frac{3}{2k-3}\binom{2k-3}{k}&=&1+\sum_{k=3}^{N}\frac{3}{2k-3}\binom{2k-3}{k}+\frac{3}{2N-1}\binom{2N-1}{N+1}\\
&=&C_{N-1}+\frac{3}{2N-1}\binom{2N-1}{N+1}\\
&=&  \frac{(2N-2)!}{N (N-1)!(N-1)!}+\frac{3(2N-2)!}{(N+1)!(N-2)!}\\
&=& \frac{1}{N+1}\binom{2N}{N}=C_N
\end{eqnarray*}
where the second equality follows from the induction hypothesis and the fourth from an easy computation. 
\end{proof}

\subsection{HVGs and Parentheses}\label{sect: HVGs and Parentheses}
Since we have seen in \Cref{thm: Catalan numbers} (i) that HVGs in $\Gc_{N,\neq}$ are counted by the Catalan number $C_{N-1}$, it is natural to ask for a bijective proof of this statement. This is the goal of this subsection. More precisely, we provide an explicit bijection between $\Gc_{N,\neq}$ and the set $\Bc_{N-1}$ of balanced parantheses of length $N$, which are known to be counted by $C_{N-1}$ \cite[p.134 f.]{koshy2009catalan}. We use the definition for balanced parentheses from \cite[p. 155]{lehman2010mathematics}.
\begin{defi}\label{def: balanced_parentheses}
Let $\epsilon$ be the empty string. The set $\Bc$ of \emph{balanced parentheses} is recursively defined via
\begin{itemize}
    \item[(i)] $\epsilon\in \Bc$.  
    \item[(ii)] If $B_1, B_2\in \Bc$, then $[B_1]B_2\in\Bc.$
\end{itemize}
The set of balanced parentheses with $N$ pairs of parentheses is denoted by $\Bc_N$.
\end{defi}

It is easily seen from the definition that any balanced parentheses $B\in \Bc_N$ can be uniquely written in the form $B=[B_1]\ldots[B_k]$ with $B_j\in \Bc_{i_j}$ for $i_j\in\mathbb{N}$ and $\sum_{j=1}^ki_j=N-k$. We will refer to this representation as \emph{normal} representation of $B$ with blocks $B_1,\ldots,B_k$ and to $i_1,\ldots,i_k$ as the \emph{lengths} of the blocks.

We now state the main result of this section.

\begin{theorem}
    Let $N\in \mathbb{N}$, $N\geq 1$. Let $\psi_N:\Gc_{N,\neq}\to \Bc_{N-1}$ be recursively defined by $\psi_1(P_1)=\epsilon$ and 
    $$
    \psi_N(G)=[\psi_{i_2-i_1}(G_{[i_1+1,i_2]})]\cdots[\psi_{i_k-i_{k-1}}(G_{[i_{k-1}+1,i_k]})]
    $$
    if $N>1$ and $G\in \Gc_{N,\neq}$ with $\Nc(G)=\{i_1<\cdots < i_k\}$. Then, $\psi_N$ is a bijection.
\end{theorem}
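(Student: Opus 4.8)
The plan is to prove that $\psi_N$ is a bijection by induction on $N$, exploiting the decomposition of an HVG into its non-nested blocks from \Cref{lem:easy properties} (vi) together with the structure of the normal representation of balanced parentheses. The base case $N=1$ is immediate since $\Gc_{1,\neq}=\{P_1\}$ and $\Bc_0=\{\epsilon\}$. For the inductive step, fix $G\in\Gc_{N,\neq}$ with $\Nc(G)=\{i_1<\cdots<i_k\}$; by \Cref{lem:easy properties} (ii) we have $i_1=1$ and $i_k=N$, so the blocks $G_{[i_j+1,i_j+1]}$ — more precisely $G_{[i_j,i_{j+1}]}$, which after relabelling lies in $\Gc_{i_{j+1}-i_j+1,\neq}$ by \Cref{lem:easy properties} (iii) — are each strictly smaller than $G$ whenever $k\geq 2$, and when $k=1$ we have $G=P_N$ and the blocks are the $k-1$ edges of the path. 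I need to first check that $\psi_N$ is well-defined, i.e., that the recursion terminates and lands in $\Bc_{N-1}$: termination is clear since each $G_{[i_{j-1}+1,i_j]}$ has fewer vertices, and the length count follows because $\sum_{j=2}^k (i_j-i_{j-1}-1) = (i_k-i_1) - (k-1) = (N-1)-(k-1) = N-k$, so concatenating $k$ bracketed blocks of total interior length $N-k$ gives a balanced string with $N-1$ pairs.

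The heart of the argument is to construct an inverse $\varphi_N:\Bc_{N-1}\to\Gc_{N,\neq}$ and verify $\varphi_N\circ\psi_N=\id$ and $\psi_N\circ\varphi_N=\id$. Given $B\in\Bc_{N-1}$ with normal representation $B=[B_1]\cdots[B_k]$ where $B_j\in\Bc_{i_j}$ and $\sum i_j = N-1-k$, the natural definition is to set $\varphi_N(B)$ to be the $1$-sum $H_1 + H_2 + \cdots + H_k$ where $H_j = \varphi_{i_j+2}(B_j)\in\Gc_{i_j+2,\neq}$; here the vertex counts work out since $\sum_j (i_j+2) - (k-1) = (N-1-k) + 2k - k + 1 = N$, and $H_1+\cdots+H_k\in\Gc_{N,\neq}$ by repeated application of \Cref{lem:operations} (iii). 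One then has to show that this $1$-sum decomposition is exactly the non-nested-block decomposition: by \Cref{lem:easy properties} (vi) any $G\in\Gc_{N,\neq}$ with $|\Nc(G)|=k$ decomposes uniquely as a $1$-sum of $k-1$ pieces along its non-nested vertices, and conversely the identified vertices in a $1$-sum $H_1+\cdots+H_k$ are precisely the non-nested vertices of the result (the endpoints of each $H_j$ are non-nested in $H_j$ by \Cref{lem:easy properties} (ii), and no new nesting is created across the $1$-sum because a $1$-sum introduces no edges spanning a cut vertex). This compatibility is what makes $\psi_N$ and $\varphi_N$ mutually inverse: applying $\psi_N$ to $\varphi_N(B)$ recovers the block structure $[B_1]\cdots[B_k]$ with $\psi_{i_j+2}(H_j) = B_j$ by the inductive hypothesis, and symmetrically for the other composition.

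The main obstacle I anticipate is the bookkeeping around the two shifts in index — the ``$+2$'' that turns a balanced string with $i_j$ pairs into an HVG on $i_j+2$ vertices, versus the ``$-1$'' coming from the $1$-sum identification — and making sure these are consistent with the slightly asymmetric indexing in the statement of $\psi_N$, where the $j$-th block is $G_{[i_{j-1}+1,i_j]}$ on $i_j - i_{j-1}$ vertices and gets sent into $\Bc_{i_j-i_{j-1}-1}$. Concretely one must verify that a block on $m$ vertices (with $m\geq 2$, since consecutive non-nested vertices $i_{j-1}<i_j$ satisfy $i_j\geq i_{j-1}+1$ and by \Cref{lem:easy properties} (iv) are adjacent) maps to $\Bc_{m-1}$, and that an HVG on $m$ vertices whose only non-nested vertices are its two endpoints corresponds under $\varphi$ to a single block $[B']$ with $B'\in\Bc_{m-2}$ — i.e., the ``irreducible'' balanced strings $[B']$ correspond exactly to HVGs $G$ with $\Nc(G)=\{1,N\}$, equivalently (by \Cref{lem: connected to nonnested vertices} applied with $s=N$ after noting $m_G(1)=N$) those $G$ with $N(N)=\{1\}$, which in turn — after deleting $N$ — are in bijection with $\Gc_{m-1,\neq}$ via \Cref{lem: g_{s,s}}. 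Threading the recursion through this identification carefully, rather than any deep new idea, is the real work; everything else is a routine induction once the well-definedness and the block-correspondence lemma are in place.
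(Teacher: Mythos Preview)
Your approach differs from the paper's: you aim to construct an explicit inverse $\varphi_N$ and verify both compositions, whereas the paper proves only injectivity of $\psi_N$ and then invokes $|\Gc_{N,\neq}|=|\Bc_{N-1}|=C_{N-1}$ from \Cref{thm: Catalan numbers}~(i) to conclude. The paper's route is shorter because it cashes in on the counting already done; your route would be self-contained and would yield the explicit inverse that the paper only sketches in the Remark following the theorem.

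However, your inverse as written has a real gap. You set $H_j=\varphi_{i_j+2}(B_j)$, but $\varphi_m:\Bc_{m-1}\to\Gc_{m,\neq}$ and $B_j\in\Bc_{i_j}$, so this is ill-typed; the well-typed choice $\varphi_{i_j+1}(B_j)\in\Gc_{i_j+1,\neq}$ then breaks your vertex count. More fundamentally, your assertion that the identified vertices of a $1$-sum $H_1+\cdots+H_k$ are \emph{precisely} the non-nested vertices of the result is false in general: any internal non-nested vertex of some $H_j$ remains non-nested in the sum, so $\Nc(H_1+\cdots+H_k)$ strictly contains the set of glue points unless each $H_j$ already satisfies $\Nc(H_j)=\{1,|V(H_j)|\}$, and then $\psi_N\circ\varphi_N$ will not recover the original block structure. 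The fix is exactly the operation $\overline{G}$ from the paper's Remark --- prepend a new vertex $1$ adjacent to every non-nested vertex of $G$, which forces $\Nc(\overline{G})=\{1,|V(\overline{G})|\}$ --- and then form the $1$-sum of the $\overline{\varphi_{i_j+1}(B_j)}$. Your final paragraph about ``irreducible'' strings $[B']$ corresponding to HVGs with $\Nc(G)=\{1,N\}$ is gesturing at this, but it never connects back to your formula for $\varphi_N$, which as stated produces $1$-sums of unrestricted HVGs.
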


\begin{proof}
Since $\sum_{j=2}^k(i_j-i_{j-1})=i_k-i_1=N-1$, it is easily seen by induction on $N$ that the map $\psi_N$ is well-defined. Moreover, $\psi_N(G)$ is given in normal representation.

As it follows from \Cref{thm: Catalan numbers} (i) and \cite[p.134 f.]{koshy2009catalan} that $|\Gc_{N,\neq}|=|\Bc_{N-1}|=C_{N-1}$, it suffices to show that $\psi_N$ is injective for every $N\geq 1$. For $N=1$, this is trivially true since $\Gc_{1,\neq}=\{P_1\}$. Let $N\geq 2$ and let $G,H\in \Gc_N$ such that $G\neq H$. If $\Nc(G)\neq \Nc(H)$, then $\psi_N(G)$ and $\psi_N(H)$ must have blocks of different lengths, which already implies $\psi_N(G)\neq \psi_N(H)$. Assume that $\Nc(G)=\Nc(H)=\{i_1<\cdots <i_k\}$. As $G\neq H$, there exists $2\leq j\leq k$ with $G_{[i_{j-1}+1,i_j]}\neq H_{[i_{j-1},i_j]}$. The induction hypothesis implies that $\psi_{i_j-i_{j-1}}(G_{[i_{j-1}+1,i_j]})\neq \psi_{i_j-i_{j-1}}(H_{[i_{j-1}+1,i_j]})$ and hence  $\psi_N(G)\neq \psi_N(H)$.
\end{proof}

The next example illustrates the bijection $\psi_N$.

\begin{exa}\label{exa: psi_n}
For the graph $G$ in \Cref{figure: exa_psi_n} we get $\psi_{10}(G)=[[[\ ]\ [\ ]\ [\ ]]\ [\ ]]\ [[\ ]\ [\ ]]\in\Bc_9.$ The process of how $\psi$ works is visualized in \Cref{figure: exa_psi_n_2}.
\begin{figure}[h]
	\centering
	\includegraphics[width=.5\linewidth]{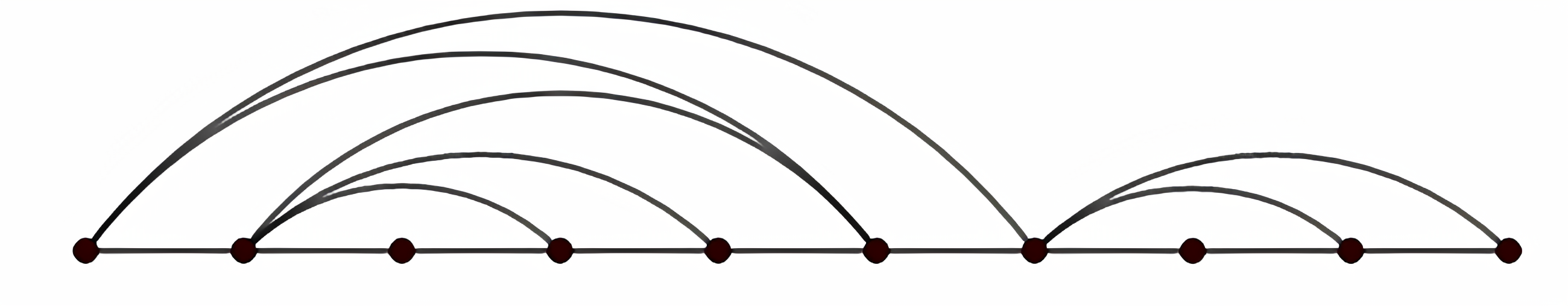}
	\caption{$G=\HVG(D)$ with $D=(10,6,2,4,5,8,9,1,3,7)$.}
	\label{figure: exa_psi_n}
\end{figure}
\begin{figure}[h]
	\centering
	\includegraphics[width=.6\linewidth]{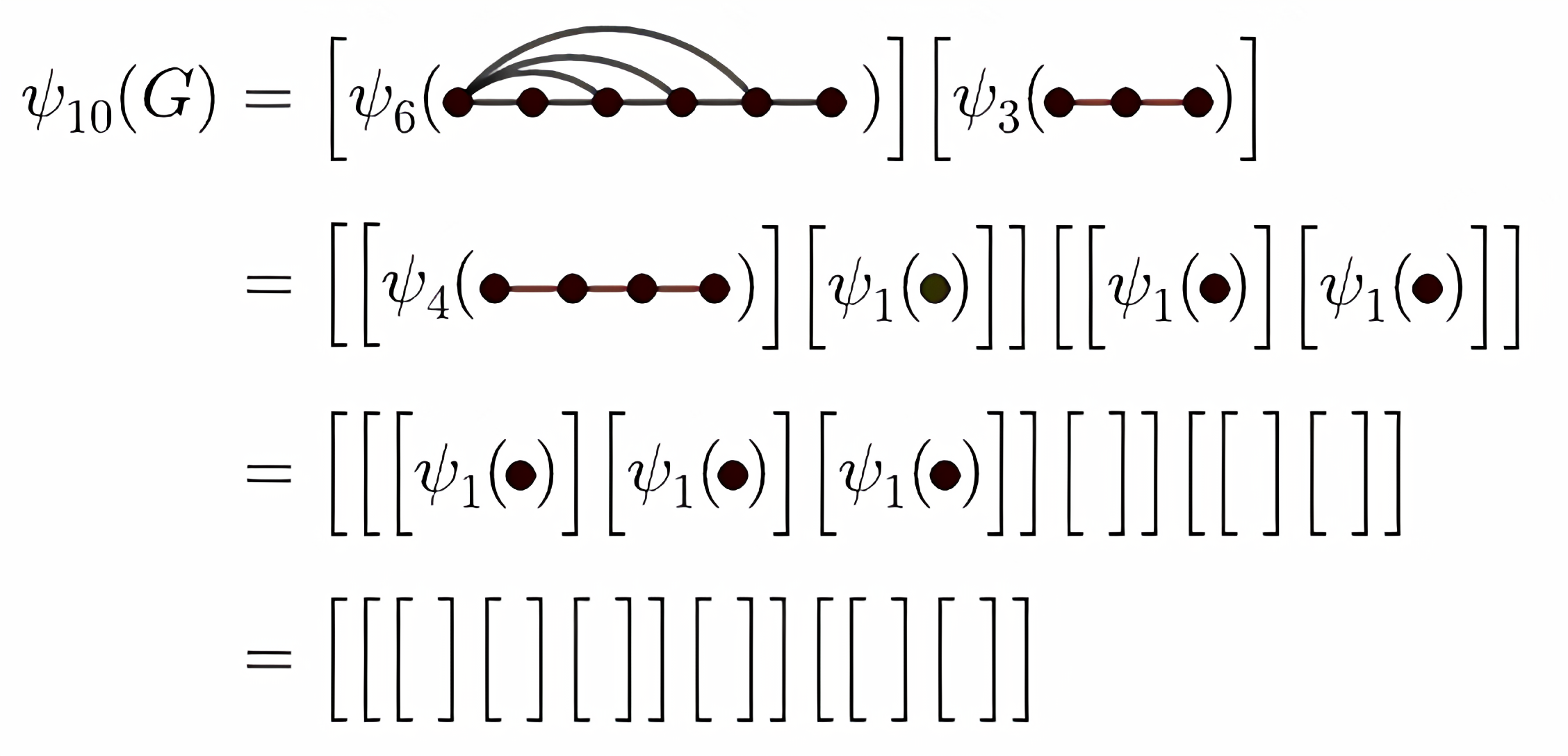}
	\caption{Applying $\psi_{10}$ to $G$}
	\label{figure: exa_psi_n_2}
\end{figure}
\end{exa}

\begin{rem}
We want to remark that it is easily seen that the inverse map $\psi^{-1}_N:\Bc_{N-1}\to\Gc_{N,\neq}$ of $\psi_N$ is given by
$\psi^{-1}_1(\epsilon)=P_1$ and 
$$
\psi_N^{-1}(B)= \overline{\psi^{-1}_{i_1}(B_1)}+\cdots+\overline{\psi^{-1}_{i_k}(B_k)},
$$
if $N>1$ and $B=[B_1]\cdots[B_k]\in\Bc_{N-1}$ with $B_j\in \Bc_{i_j-1}$ and $\sum_{j=1}^k{i_j}=N-1$.
Here, for an HVG $G$, we denote by $\overline{G}$ the HVG $$((\{1,2\},\{12\})+G)\cup \big\{1(i+1)~:~ i\in \Nc(G)\big\},$$
i.e., $\overline{G}$ is obtained from $G$ by adding a ``new'' vertex $1$ that is connected to all non-nested vertices of $G$.
\end{rem}

\section{Horizontal Visibility Graphs from arbitrary data} \label{sec: HVG arbitrary data}
While in the previous section we were focusing on HVGs corresponding to data sequences without equal entries, we will now omit this restriction and allow arbitrary data sequences. 
As before, it follows from \Cref{rem:integral} that we only need to consider integral data sequences.

Our first goal is to describe an explicit method to construct a data sequence $D$ that realizes a given $G\in \Gc_N$ as its HVG. This is very similar to \Cref{thm:dataGraph}. In the second part of this section, we turn to a more combinatorial problem: Namely, counting HVGs in $\Gc_N$. In particular, we prove \Cref{thm: Catalan numbers} (ii).

\subsection{From HVGs to data sequences}\label{sect: Integer vector algo}
In the following, we are asking the analogous question to the one posed in  \Cref{subsect:GraphToData}. Namely, given $N\in\mathbb{N}$, $G=([N],E)\in\Gc_N$ we are searching a data sequence $D\in\mathbb{N}^N$ realizing $G$. An answer is provided by the next theorem, which uses the same notations as in \Cref{subsect:GraphToData}.

\begin{theorem}\label{thm: sequence arbitrary data}
Let $N\in\mathbb{N}$ and $G\in \Gc_N$. For $1\leq i\leq N$ let
$$
d_i=N-d_{\nest}(i).
$$
Then $D=(d_1,\ldots,d_N)$ realizes $G$, i.e., $\HVG(D)=G$.
\end{theorem}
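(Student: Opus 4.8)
The plan is to show both inclusions $E(\HVG(D)) \subseteq E(G)$ and $E(G)\subseteq E(\HVG(D))$, using the assignment $d_i = N - d_{\nest}(i)$, where $d_{\nest}(i)$ counts the number of edges $uv\in E(G)$ with $u<i<v$. Since the edges $i(i+1)$ for $1\le i\le N-1$ lie in every HVG and, on the other side, $G\in\Gc_N$ always contains $P_N$, it suffices to treat edges $ij$ with $2\le i+1<j\le N$, i.e.\ the ``long'' edges. The key elementary observation to set up at the start is the following monotonicity: if $ij\in E(G)$ with $i+1<j$, then for every $i<k<j$ any nesting edge of $i$ (an edge $uv\in E(G)$ with $u<i<v$) must satisfy $v\ge j$ by non-crossing (\Cref{lem:easy properties} (i)), hence it is also a nesting edge of $k$; together with $ij$ itself being a nesting edge of $k$ this gives $d_{\nest}(k) > d_{\nest}(i)$, i.e.\ $d_k < d_i$. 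The symmetric argument gives $d_k < d_j$. This immediately yields $ij\in E(\HVG(D))$, establishing $E(G)\subseteq E(\HVG(D))$.

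For the reverse inclusion, I would argue by contraposition: suppose $ij\notin E(G)$ with $2\le i+1<j\le N$, and show $ij\notin E(\HVG(D))$, i.e.\ produce some $i<k<j$ with $d_k\ge d_i$ or $d_k\ge d_j$, equivalently $d_{\nest}(k)\le d_{\nest}(i)$ or $d_{\nest}(k)\le d_{\nest}(j)$. Since $G$ is non-crossing and $ij\notin E(G)$, by \Cref{lem:easy properties} (iii) the induced graph $G_{[i,j]}$ is (after relabelling) an HVG on $j-i+1$ vertices that is not the path, so it has a ``blocking'' structure. Concretely, since $ij\notin E(G_{[i,j]})$ but $G_{[i,j]}$ is connected (being an HVG), there must be an edge $uv\in E(G_{[i,j]})$ with $i\le u<k_0<v\le j$ for some vertex $k_0$ strictly between $i$ and $j$ that either separates $i$ from $j$ or witnesses a high-visibility vertex; the cleanest route is to pick the maximal $k$ with $i<k<j$ and $m_G(k)\ge j$ if such exists (Case 1), or else to use that $m_G(i)<j$ to find, via non-crossing, a vertex $k$ with $i<k<j$ whose every nesting edge extends past $j$, forcing $d_{\nest}(k)\le d_{\nest}(j)$ (Case 2). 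In Case 1, any nesting edge of $k$ spans past $j$, and additionally $ij$ is \emph{not} an edge, so comparing the nesting edges of $k$ and those of $i$ one gets $d_{\nest}(k)\le d_{\nest}(i)$, hence $d_k\ge d_i$ and $ij\notin E(\HVG(D))$. This mirrors the case analysis already carried out in the proof of \Cref{thm:dataGraph}.

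The main obstacle — and the place where this proof genuinely differs from the distinct-entries case — is that here $d_i$ and $d_j$ need not be distinct from each other or from $d_k$, so the inequalities one derives are non-strict ($d_k\ge d_i$ rather than $d_k>d_i$), and the definition of $\HVG$ with the strict inequalities $d_i>d_k<d_j$ is exactly what makes this sufficient. I must be careful that in the ``blocking vertex'' argument I extract a $k$ with $d_{\nest}(k)\le d_{\nest}(i)$ \emph{or} $\le d_{\nest}(j)$ — a tie is enough to kill the edge $ij$ — and conversely that in the first inclusion the inequality $d_{\nest}(k)>d_{\nest}(i)$ is genuinely strict because $ij$ is a nesting edge of $k$ that is \emph{not} a nesting edge of $i$ (as $i$ is an endpoint), so the count strictly increases. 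Once these sign conventions are pinned down, the argument is a direct adaptation of the proof of \Cref{thm:dataGraph}, with ``nesting degree'' replacing the more delicate tie-breaking permutation $\sigma$; no separate treatment of equal entries is needed since the nesting degree is defined purely combinatorially from $G$.
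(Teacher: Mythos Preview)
Your forward inclusion $E(G)\subseteq E(\HVG(D))$ is correct and coincides verbatim with the paper's argument (which simply reuses the first half of the proof of \Cref{thm:dataGraph}). Your observations about strict versus non-strict inequalities are also exactly right.

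For the reverse inclusion your strategy is sound, but the case analysis you sketch does not work as written. Your Case~1 condition ``$m_G(k)\ge j$'' is satisfied trivially by $k=j-1$ (since $(j-1)j\in E(G)$ always), so it does not single out a useful vertex. More importantly, from ``every nesting edge of $k$ spans past $j$'' one can only deduce that each such edge is a nesting edge of $j$, giving $d_{\nest}(k)\le d_{\nest}(j)$; your stated conclusion $d_{\nest}(k)\le d_{\nest}(i)$ does not follow. The roles of $i$ and $j$ are swapped in your Case~1.

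The paper handles this half differently and more economically: it fixes an auxiliary realizing sequence $\widetilde D$ with $\HVG(\widetilde D)=G$, assumes $ij\in E(\HVG(D))$ but some $i<k<j$ has $\widetilde d_k\ge\min(\widetilde d_i,\widetilde d_j)$, and---because here $i$ and $j$ play symmetric roles (no tie-breaking permutation)---reduces by symmetry to $\widetilde d_k\ge\widetilde d_i$ with $k$ minimal. The argument of Case~1 in \Cref{thm:dataGraph} (sides swapped) then gives $d_{\nest}(i)\ge d_{\nest}(k)$, hence $d_i\le d_k$, a contradiction; only one case is needed. If you prefer to avoid $\widetilde D$ and stay purely graph-theoretic, the clean fix is: since $ij\notin E(G)$, \Cref{lem:easy properties}~(iv) forces $\Nc(G_{[i,j]})$ to contain some $k$ with $i<k<j$; every nesting edge $uv$ of $k$ in $G$ then has $u<i$ or $v>j$, and since these edges are totally ordered by inclusion (non-crossing), looking at the innermost one shows that either all satisfy $u<i$ (so all nest $i$) or all satisfy $v>j$ (so all nest $j$), yielding the required inequality.
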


\begin{proof}
Let $\widetilde{D}=(\widetilde{d}_1,\ldots,\widetilde{d}_N)\in\mathbb{N}^N$ with $\HVG(\widetilde{D})=G$ and let $H=\HVG(D)$. Verbatim the same arguments as in the proof of \Cref{thm:dataGraph} show that $E(G)\subseteq E(H)$. 

For the reverse containment, let  $ij\in E(H)$ with $i+1<j$ and assume that there exists $i<k<j$ with $\widetilde{d}_k\geq \min(\widetilde{d}_i,\widetilde{d}_j)$. Since in contrast to the proof of \Cref{{thm:dataGraph}} everything is symmetric with respect to $i$ and $j$ one can assume that  $\min(\widetilde{d}_i,\widetilde{d}_j)=\widetilde{d}_i$. As in  Case 1 of the proof of \Cref{thm:dataGraph} it follows that $d_{\nest}(i)\geq d_{\nest}(k)$ which directly implies $d_i\leq d_k$, yielding a contradiction. 
Since $i(i+1)$ for $1\leq i\leq N-1$ lies in any HVG, we conclude $G=H$.
\end{proof}
The graph in \Cref{figure: exa_easy properties 1} can be represented with \Cref{thm: sequence arbitrary data} via $D=(4,3,3,4)$.

\subsection{Counting HVGs~--~Schr\"oder numbers}
The aim of this section is to prove \Cref{thm: Catalan numbers} (ii). Namely, to show that the number of HVGs of length $N$ is given by the $(N-2)$-nd large Schr\"oder number $r_{N-2}$. Those are known to count several combinatorial objects including certain types of lattice paths (see \cite{ShapiroSulanke}). We start by providing relevant definitions. 
\begin{defi}
 A \emph{bracketing} $B$ of a string of identical letters $x$ is
 \begin{itemize}
     \item either a single letter $x$, or
     \item $B=(B_1,\ldots,B_k)$, where $k\geq 2$, and $B_1,\ldots,B_k$ are bracketings and brackets around a single letter as well as the outer surrounding brackets are omitted.
 \end{itemize}
 The bracketing $x\cdots x$ without any brackets will be referred to as a \emph{trivial bracketing}. The \emph{length} $\ell(B)$ of a bracketing $B$  is defined to be the number of enclosed letters and we use $\Bs_N$ to denote the set of bracketings of length $N$.
\end{defi}
It is easy to see from the definition that every bracketing $B\in\Bs_{N}$ has a unique representation of the form $B=B_1\cdots B_k$, where for $1\leq i\leq k$, $B_i$ is either a trivial bracketing, or, $B_i=(\tilde{B}_i)$ for a bracketing $\tilde{B}_i$ and no two trivial bracketings are adjacent. The last condition means that adjacent trivial bracketings are grouped together into a trivial bracketing of maximal length. We call this representation the \emph{normal form} of a bracketing. 
$s_{N}=|\Bs_{N+1}|$ is called the \emph{$N$-th little Schr\"oder number} \cite{schroeder1870Probleme}. 
It is well-known that $r_N=2s_N$. 
 Similar to \Cref{sect: HVGs and Catalan} we write $\Gc_N^s$ for the set of HVGs $G$ in $\Gc_N$ with $m_G(1)=s$.  The next lemma allows us to reduce the proof of \Cref{thm: Catalan numbers} (ii) to counting HVGs without $1N$.

\begin{lem}\label{lem: G_1n G_setminus 1n}
	For $N\geq 3$, we have
	\[
	|\Gc_{N}^N|=|\Gc_N\setminus\Gc_{N}^{N}|, \qquad \text{i.e.},\qquad 	|\Gc_{N}|=2	|\Gc_{N}^N|.
	\]
\end{lem}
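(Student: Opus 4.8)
The claim is that the map $G \mapsto G \setminus \{1N\}$ gives a bijection between $\Gc_N^N$ (HVGs containing the edge $1N$) and $\Gc_N \setminus \Gc_N^N$ (HVGs not containing $1N$); the identity $|\Gc_N| = 2|\Gc_N^N|$ then follows immediately. First I would check that this map is well-defined, i.e. that $G \setminus \{1N\} \in \Gc_N$. For $N \geq 3$ the edge $1N$ is not of the form $i(i+1)$, so this is a direct application of \Cref{lem:operations} (i), which gives $G \setminus e \in \Gc_N$ for any $e \in E(G)$ that is not one of the path edges. Also $G \setminus \{1N\}$ genuinely fails to contain $1N$, so the image lands in $\Gc_N \setminus \Gc_N^N$.

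Next I would show the map is injective. Suppose $G, G' \in \Gc_N^N$ with $G \setminus \{1N\} = G' \setminus \{1N\}$. Since both $G$ and $G'$ contain $1N$, adding this edge back recovers $G = (G \setminus \{1N\}) \cup \{1N\}$ and likewise for $G'$, so $G = G'$. For surjectivity, start with $H \in \Gc_N \setminus \Gc_N^N$ and set $G = H \cup \{1N\}$. I need $G \in \Gc_N$ and $m_G(1) = N$, the latter being immediate once $1N \in E(G)$. To see $G \in \Gc_N$, I would invoke the remark following \Cref{lem:operations}: if $H \in \Gc_N$ and $1 \leq j < \ell \leq N$ are such that $H \cup j\ell$ is non-crossing, then $H \cup j\ell \in \Gc_N$. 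Here $j = 1$, $\ell = N$, and $H \cup \{1N\}$ is automatically non-crossing because the edge $1N$ spans all vertices — there is no vertex outside $[1,N]$ with which it could cross, so adding it creates no crossing pair (any would-be crossing edge $ab$ with $1 < a < N < b$ cannot exist). Hence $G = H \cup \{1N\} \in \Gc_N^N$ and clearly maps to $H$.

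The bijection established, we get $|\Gc_N^N| = |\Gc_N \setminus \Gc_N^N| = |\Gc_N| - |\Gc_N^N|$, hence $|\Gc_N| = 2|\Gc_N^N|$, as claimed.

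\textbf{Main obstacle.} There is essentially no hard step here; the only thing requiring a moment's care is justifying that $H \cup \{1N\}$ is non-crossing in the surjectivity argument — one must observe that a crossing would require an edge $ab$ with $1 < a < N < b \leq N$, which is impossible since all vertices lie in $[N]$. Everything else is a routine application of \Cref{lem:operations} (i) and the remark immediately following it. One should also not forget the hypothesis $N \geq 3$, which is exactly what guarantees $1N \neq i(i+1)$ so that \Cref{lem:operations} (i) applies; for $N = 2$ the edge $12$ is a path edge and the statement would be vacuous or false.
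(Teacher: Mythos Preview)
Your proof is correct and follows essentially the same bijection $G\mapsto G\setminus\{1N\}$ as the paper. The only cosmetic difference is in the surjectivity step: the paper applies \Cref{lem:operations}~(ii) directly, using that $1$ and $N$ are always non-nested (\Cref{lem:easy properties}~(ii)), whereas you invoke the remark after \Cref{lem:operations} via the observation that adjoining the edge $1N$ cannot create a crossing; these are equivalent.
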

\begin{proof}
It is easy to see that the map
	\[
	\phi:\Gc_{N}^N\to \Gc_N\setminus \Gc^N_{N}: G\mapsto G\setminus \{1N\}
	\]
	is a bjection. Indeed, it follows from \Cref{lem:operations} (i) and (ii) that $\phi$ is well-defined and surjective, respectively. Since the injectivity is obvious, the claim follows.
\end{proof}

As $r_N=2s_N$, the next statement completes the proof of \Cref{thm: Catalan numbers} (ii).

\begin{theorem}\label{thm:Schroeder}
    Let $N\in\mathbb{N}$, $N\geq 2$. Then
    $$
|\Gc_N^N|=s_{N-2}.
    $$
\end{theorem}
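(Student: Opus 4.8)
The plan is to establish a bijection between $\Gc_N^N$ (HVGs on $N$ vertices containing the edge $1N$) and the set $\Bs_{N-1}$ of bracketings of a string of $N-1$ identical letters, since $|\Bs_{N-1}| = s_{N-2}$ by the definition of the little Schr\"oder numbers. The key structural observation to exploit is \Cref{lem: connected to nonnested vertices} (which, as remarked in the text, holds in the generality needed here, namely for $G \in \Gc_N^s$ with $s = N$): for $G \in \Gc_N^N$ we have $N(N) = \Nc(G_{[N-1]})$. So vertex $N$ is attached to exactly the non-nested vertices of the induced subgraph on $[N-1]$, and by \Cref{lem:easy properties} (vi) that subgraph decomposes as a $1$-sum $G_{[N-1]} = G_{[i_1,i_2]} + \cdots + G_{[i_{k-1},i_k]}$ along its non-nested vertices $\Nc(G_{[N-1]}) = \{i_1 < \cdots < i_k\}$ with $i_1 = 1$, $i_k = N-1$. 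This suggests the recursion: the top-level structure of the bracketing records the decomposition of $G_{[N-1]}$ into these blocks, and within each block we recurse — but with a twist, because each block $G_{[i_{j-1},i_j]}$ is an HVG whose first and last vertices are non-nested, so it is of the form $\overline{H}$ for some $H \in \Gc_{i_j - i_{j-1} - 1}^{?}$ in the sense of the $\overline{(\cdot)}$ operation defined in the last remark of Section 3; the recursion should really be applied to the HVG obtained by also joining $1$ and $N$ and then passing to the "core."

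The steps I would carry out are as follows. First, for $N = 2$ check directly that $|\Gc_2^2| = 1 = s_0$. Second, for $N \geq 3$ and $G \in \Gc_N^N$, use \Cref{lem: connected to nonnested vertices} to identify $N(N)$ with $\Nc(G_{[N-1]})$, list these as $\{1 = i_1 < \cdots < i_k = N-1\}$, and apply \Cref{lem:easy properties} (vi) to write $G_{[N-1]}$ as the $1$-sum of the blocks $G_{[i_{j-1},i_j]}$. Third, observe that adding back the structure around vertex $N$ shows $G$ itself is determined by the ordered tuple of blocks $(G_{[i_1,i_2]}, \ldots, G_{[i_{k-1},i_k]})$ together with the data that $1N \in E(G)$; conversely any such tuple of HVGs whose first and last vertices are non-nested, glued by $1$-sums and then capped by a vertex $N$ joined to all non-nested vertices, lies in $\Gc_N^N$ (using \Cref{lem:operations} (iii) and the remark after it). Fourth, set up the bijection $\Gc_N^N \to \Bs_{N-1}$ recursively: when $k = 1$ (i.e. $G_{[N-1]} = P_{N-1}$, no nesting), send $G$ to the trivial bracketing $x\cdots x$ of length $N-1$; when $k \geq 2$, each block $G_{[i_{j-1},i_j]}$ either is a path $P_{i_j - i_{j-1}+1}$ (contributing a maximal run of letters, i.e. a trivial sub-bracketing) or properly contains nested vertices, in which case strip off the outer "cap" vertices $i_{j-1}, i_j$ to get a smaller HVG in $\Gc^{\bullet}_{\bullet}$ to which the map applies recursively, wrapped in one pair of brackets; concatenating the resulting sub-bracketings in order gives an element of $\Bs_{N-1}$ in normal form. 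Fifth, verify by induction on $N$ that the length is correct ($\sum_j$ of the block contributions telescopes to $N-1$), that the map lands in bracketings in normal form, and that it is a bijection — injectivity because distinct $G$ yield either different block-length patterns $\Nc(G_{[N-1]})$ or, by induction, different sub-bracketings on some block; surjectivity because the normal form of any $B \in \Bs_{N-1}$ reconstructs the block structure and the sub-bracketings invert the recursion.

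The main obstacle I expect is getting the recursion's base case and the "strip the cap" operation exactly right so that the counting matches $\Bs_{N-1}$ rather than, say, $\Bs_N$ or $\Bc_{N-1}$ — i.e. correctly accounting for the off-by-one shifts introduced by (a) deleting vertex $N$, (b) the $1$-sum identifications which make consecutive blocks share a vertex, and (c) the passage between an HVG $\overline{H}$ with non-nested endpoints and its core $H$. Concretely, one must check that a block $G_{[i_{j-1},i_j]}$ on $i_j - i_{j-1} + 1$ vertices, after removing its two endpoint "cap" vertices, corresponds to a bracketing of length $i_j - i_{j-1} - 1$ enclosed in one pair of brackets, so that a length-$(i_j - i_{j-1})$ contribution results and the total over $j = 2, \ldots, k$ is $i_k - i_1 = N - 1$; and that a trivial block of $m$ edges contributes $m$ letters with no brackets, with the "no two trivial bracketings adjacent" condition of the normal form corresponding precisely to the fact that the $i_j$ are the non-nested vertices (so a maximal path segment is grouped as one block). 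Once the bookkeeping is pinned down, the inductive verification of the bijection is routine and parallels the proof of the bijection $\psi_N$ in \Cref{sect: HVGs and Parentheses}.
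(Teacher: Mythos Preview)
There is a genuine gap. Your argument hinges on the claim that for $G\in\Gc_N^N$ the vertex $N$ is joined precisely to the non-nested vertices of $G_{[N-1]}$, i.e.\ $N(N)=\Nc(G_{[N-1]})$, which you justify via \Cref{lem: connected to nonnested vertices} and the remark after it. But that remark extends the lemma only from $\Gc_{s,\neq}^s$ to $\Gc_{N,\neq}^s$; the subscript $\neq$ is still present, and the very next sentence of the paper states explicitly that the lemma does \emph{not} generalize to arbitrary HVGs, pointing to \Cref{exa: easy properties}. Concretely, $G=\HVG((3,1,1,4))\in\Gc_4^4$ has $E(G)=\{12,23,34,14\}$, so $G_{[3]}=P_3$ with $\Nc(G_{[3]})=\{1,2,3\}$, whereas $N(4)=\{1,3\}$.

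This makes your map non-injective already for $N=4$: both $\HVG((3,1,1,4))$ and $\HVG((3,2,1,4))$ lie in $\Gc_4^4$, both restrict to $G_{[3]}=P_3$, and hence your recursion assigns them the same bracketing. The missing information---\emph{which} non-nested vertices of $G_{[N-1]}$ are actually joined to $N$---is precisely the extra freedom that distinguishes $\Gc_N$ from $\Gc_{N,\neq}$ and inflates the count from Catalan to Schr\"oder; your construction discards it. The paper avoids this by working instead with $\Gc_N\setminus\Gc_N^N$ (equal in size to $\Gc_N^N$ by \Cref{lem: G_1n G_setminus 1n}): with no cap vertex present, the bijection to $\Bs_{N-1}$ can record, via the trivial/non-trivial distinction in the normal form, exactly which maximal pieces of the $1$-sum decomposition carry a spanning edge and which do not.
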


\begin{proof}
We clearly have $|\Gc_2^2|=1=s_0$ and hence the claim holds in this case. 

For ease of notation, we set $\Gc_N^*=\Gc_N\setminus \Gc_N^N$. To show the claim we provide a bijection between  $\Bs_{N}$ and $\Gc_{N+1}^*$ for $N\geq 2$. 
We consider the map $ \xi_{N}:\Bs_{N} \to \Gc_{N+1}^*$ which is defined by $\xi_2(xx)=P_3$, $\xi_N(x\cdots x)=P_{N+1}$ for any $N\geq 2$. If $N\geq 3$ and  $B=B_1\cdots B_k\in \Bs_N$ is in normal form with non-trivial blocks $B_{i_1},\ldots,B_{i_r}$, where $i_1<i_2<\cdots <i_r$, we recursively define
$$
\xi_N(B)=\xi_{\ell(B_1)}(\widetilde{B}_1)+\cdots + \xi_{\ell(B_k)}(\widetilde{B}_k) \cup\{(\sum_{j=1}^{i_m-1}\ell(B_j)+1)(\sum_{j=1}^{i_m}\ell(B_j)+1) ~:~1\leq m\leq r\},
$$
where $B_j=\widetilde{B}_{j}\in B_{\ell(B_j)}$ if $B_j$ is trivial and $B_j=(\widetilde{B}_j)$, otherwise.   We also set $\xi_1(x)=P_2$. 
As $\sum_{i=1}^k(\ell(B_i)+1)-(k-1)=\sum_{i=1}^k\ell(B_i)+1=N+1$ and $\widetilde{B}_j\in \Bs_{\ell(B_j)}$, it follows by induction on $N$ and  \Cref{lem:operations} (iii) that $\xi_N$ is well-defined. 
The map $\xi_N$ is obviously injective for $N=2$, and for $N\geq 3$, using induction, we get injectivity directly from the definition of $\xi_N$. It remains to show that $\xi_N$ is surjective. For $N=2$, this is clear. Assume $N\geq 3$ and let $G\in \Gc_{N+1}^*\setminus \{P_{N+1}\}$. Since $1(N+1)\notin E(G)$, there exists a non-nested vertex $s$ of $G$ with $1<s<N+1$. Choosing $s$ maximal, it follows that $s(N+1)\in E(G)$. By \Cref{lem:easy properties} (iii) and \Cref{lem:operations} (i) it holds that $G_{[s,N+1]}\setminus \{s(N+1)\}\in\Gc_{N+2-s}^*$. We now distinguish two cases. If $1s\notin E(G)$, then again by  \Cref{lem:easy properties} (iii) we have $G_{[1,s]}\in \Gc_{s}^*$. By induction, there exist $B_1\in\Bs_{s-1}$ and $B_2\in \Bs_{N-s+1}$ such that $\xi_{s-1}(B_1)=G_{[1,s]}$ and $\xi_{N-s+1}(B_2)=G_{[s,N+1]}\setminus \{s(N+1)\}$. As $B_1(B_2)\in \Bs_N$, we further conclude that
\begin{eqnarray*}
\xi_N(B_1(B_2))=&\xi_{s-1}(B_1)+\xi_{N-s+1}(B_2)\cup\{s(N+1)\}\\
=&G_{[1,s]}+G_{[s,N+1]}\setminus\{s(N+1)\}\cup\{s(N+1)\}=G.
\end{eqnarray*}
If $1s\in E(G)$, then $G_{[1,s]}\setminus\{1s\}\in \Gc_s^*$ and there exists $B_1\in \Bs_{s-1}$ with $\xi_{s-1}(B_1)=G_{[1,s]}\setminus\{1s\}$. A similar computation as in the previous case shows that $\xi_N((B_1)(B_2))=G$. Hence, the map $\xi_N$ is surjective.  This finishes the proof.
\end{proof}

We provide an example to illustrate the bijection $\xi_N$.
\begin{exa}
Applying $\xi_8$ to 
	\[
	B=(xx)\big((xxx)x(xx)\big)
	\]
	results in
	\begin{eqnarray*}
		\xi_{8}(B)&=&\xi_2(xx)+\xi_6((xxx)x(xx))\cup\{13,39\}\\
	&=&\xi_2(xx)+\xi_3(xxx)+\xi_1(x)+\xi_2(xx)\cup\{13,39\}\cup\{36,79\},
	\end{eqnarray*}
	and the graph obtained is shown in \Cref{img: HVG Thm Exa phi}.

\begin{figure}[h]
	\includegraphics[width=10.0cm]{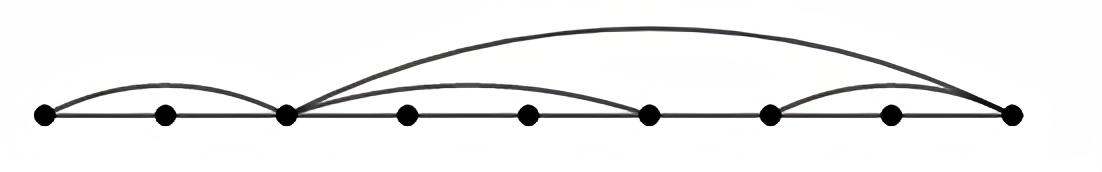}
	\centering
	\caption{$\xi_{8}(B)$.}
	\label{img: HVG Thm Exa phi}
\end{figure}
\end{exa}

The little Schr\"oder number $s_{N-2}$ is known to count a variety of combinatorial objects, including \emph{dissections} of a convex polygon $\Pi_N$ on $N$ vertices, labeled $1,\ldots,N$. Here, a dissection of $\Pi_N$ is defined as a subdivision of $\Pi_N$ into polygonal regions via non-crossing diagonals between vertices of $\Pi_N$ (see \cite[Section 3]{Flajolet1999Dissections}). In other words, a dissection is a non-crossing graph containing the cycle $1,\ldots,N,1$. In particular, any HVG in $\Gc_N$ with $1N\in E(G)$ can naturally be viewed as a dissection. \Cref{thm:Schroeder} even implies that every dissection can be obtained this way.

\begin{cor} \label{cor: dissections}
For $N\geq 3$, the sets $\Gc_N^N$ and $\Pi_N$ are in natural bijective correspondence, where the map is given by the identity.
\end{cor}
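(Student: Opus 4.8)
The plan is to show both inclusions: every $G \in \Gc_N^N$ is a dissection of $\Pi_N$, and every dissection of $\Pi_N$ arises as such an HVG. The first direction is immediate: if $G \in \Gc_N^N$, then $1N \in E(G)$ and $i(i+1) \in E(G)$ for all $1 \le i \le N-1$, so $G$ contains the cycle $1,2,\ldots,N,1$; moreover $G$ is non-crossing by \Cref{lem:easy properties} (i), hence $G$ is a dissection. This realizes $\Gc_N^N$ as a subset of the set of dissections via the identity map on edge sets.

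For the reverse inclusion, I would argue by a counting argument combined with the first inclusion. By \Cref{thm:Schroeder} and the fact (recalled in the text) that $s_{N-2}$ counts the dissections of $\Pi_N$, we have $|\Gc_N^N| = s_{N-2} = |\{\text{dissections of }\Pi_N\}|$. Since the identity map embeds $\Gc_N^N$ into the set of dissections (first paragraph) and the two finite sets have the same cardinality, the map is a bijection. This is the cleanest route and avoids having to directly verify that an arbitrary dissection satisfies the horizontal-visibility condition.

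Alternatively, for a more self-contained proof one could exhibit the inverse directly: given a dissection $\Delta$ of $\Pi_N$, one shows $\Delta \in \Gc_N^N$ by constructing a realizing data sequence. A natural candidate is $d_i = N - d_{\nest}(i)$ as in \Cref{thm: sequence arbitrary data}, where $d_{\nest}$ is computed with respect to the edge set of $\Delta$; one would then need to check $\HVG(D) = \Delta$. The containment $E(\HVG(D)) \supseteq$ the path edges is automatic, and the crossing-freeness of $\Delta$ (being a dissection) together with the presence of the full boundary cycle should make the verification go through as in the proof of \Cref{thm: sequence arbitrary data}. I would only include this if a constructive bijection is desired; otherwise the cardinality argument suffices.

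The main obstacle is essentially bookkeeping rather than depth: one must be careful that \Cref{thm:Schroeder} is being invoked together with the classical fact that dissections of an $N$-gon are counted by the little Schröder number $s_{N-2}$ (cited via \cite{Flajolet1999Dissections}), so that the equality of cardinalities is genuinely available; and one must confirm that "natural bijective correspondence given by the identity" is the right phrasing, i.e.\ that the edge set of $G$, viewed inside $\binom{[N]}{2}$, literally is the edge set of the corresponding dissection. Given the groundwork already laid (\Cref{lem:easy properties} (i) for non-crossingness and the observation that $P_N \subseteq G$ for all $G \in \Gc_N$), the proof is short.
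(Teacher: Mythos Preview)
Your proposal is correct and matches the paper's own argument essentially verbatim: the paper observes (in the paragraph immediately preceding the corollary) that every $G\in\Gc_N^N$ is non-crossing and contains the boundary cycle, hence is a dissection, and then invokes \Cref{thm:Schroeder} together with the classical count of dissections by $s_{N-2}$ to conclude the identity inclusion is a bijection. Your optional constructive alternative is extra and not needed here.
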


\section{Open Problems and future work} \label{sec: outlook}
The main goals of this article lay on the reconstruction of HVGs in $\Gc_N$ from a given ordered degree sequence and in counting HVGs in $\Gc_N$ and $\Gc_{N,\neq}$ which led us to objects that are counted by the large Schr\"oder and Catalan numbers, respectively. From our results several open questions arose that we now briefly discuss.

As an extension of HVGs it is natural to consider the more general class of  \emph{visibility graphs} (VGs for short) \cite{lacasa2008time}, defined as follows. Given a data sequence $(t_1,d_1),\ldots,(t_N,d_N)$, where the $t_i$ are time points, the visibility graph of this sequence is the graph on vertex set $[N]$, where $ij$ is an edge iff $d_k<d_j+(d_i-d_j)\frac{t_j-t_k}{t_j-t_i}$ for all $t_k$ with $t_i<t_k<t_j$. It is immediately seen that this graph always contains the HVG of the data sequence $(d_1,\ldots,d_N)$ as a subgraph. In line with \Cref{thm: Catalan numbers} it is natural to ask for the cardinality of VGs on a fixed number of vertices. To this end, in a first step, we successively constructed VGs from random data-sequences of length up to $7$ until no new VGs were found. Though there is no guarantee to have exhausted the whole set of VGs on up to $7$ nodes in this way, we suspect that the number of those VGs are the ones displayed in the next table:\\
\begin{center}
   $\begin{array}{c|c}
    N & \text{number of VGs on }N \text{ nodes} \\ \hline
     1 & 1 \\
     2 & 1 \\
     3 & 2 \\
     4 & 6 \\
     5 & 25 \\
     6 &  138\\
     7 &  972\\
     8 & 8477
\end{array}$. 
\end{center}
This sequence seems to be sequence A007815 in OEIS \cite{OEIS}, which counts so-called persistent graphs on $N$ nodes. On the one hand, every VG is a persistent graph. On the other hand, there exist persistent graphs which are not VGs \cite{ameer2020terrain}. In particular, sequence A007815 is just an upper bound for the cardinality in question. So, we do not even have a conjectured answer to the following question.

\begin{question}\label{qu:numbers}
What is the number of VGs on $N$ nodes?
\end{question}

Since the set of HVGs on $N$ nodes is contained in the set of VGs on $N$ nodes, one possible way to answer \Cref{qu:numbers} is by means of the following question:

\begin{question}
Can one characterize (graph-theoretically) the VGs that are not HVGs?
\end{question}

Moreover, one could asked under what constrains on a given data-sequence the associated VG is actually an HVG.  More precisely, it would be interesting to consider the following problem:
\begin{question}
Can one characterize data sequences such that the corresponding VG is an HVG? If so, is it possible to construct a data sequence having the considered VG as its HVG? Does the same data sequence work?
\end{question}

Motivated by what is happening for HVGs, the next question arises.
\begin{question}
Is there a difference between VG associated to sequences with pairwise distinct entries (when restricting to the second coordinate) in contrast to VGs associated to arbitrary sequences where equal entries in the second coordinate are allowed?
\end{question}

In this article we have not touched the run time of algorithms for the construction of HVGs from a given data sequence. Several such algorithms exist \cite{lan2015fast, Yela2020HVGCoding, luque2009horizontal, lacasa2012time}. The fastest known algorithm which is due to Lacasa et al. claims to have a run time of  $O(N)$ for special classes of sequences of $N$ data points. In ongoing work, the last two authors constructed an algorithm which has a run time in $O(N)$ for general time series \cite{KoehneSchmidt2021}.   \Cref{fig: runtime_comparison_bst_lta} shows a comparison of the computation time of this proposed algorithm, the binary search tree (BST) approach in \cite{Yela2020HVGCoding}, and Lacasa's algorithm  for random walks of length up to $10^5$.

\begin{figure}[h]
    \centering
    \includegraphics[width=10.0cm]{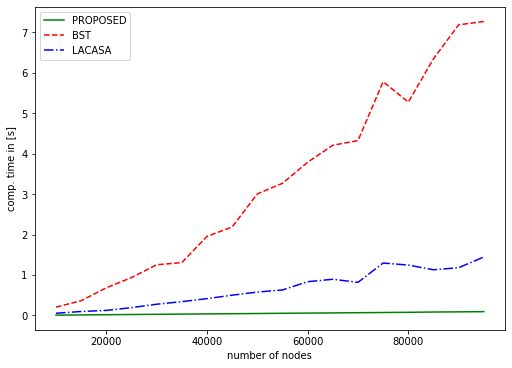}
    \caption{Comparison of different HVG algorithms}
    \label{fig: runtime_comparison_bst_lta}
\end{figure}
~\\



\bibliographystyle{alpha}
\bibliography{references}

\newcommand{\etalchar}[1]{$^{#1}$}
\begin{thebibliography}{AGLK{\etalchar{+}}20}

\bibitem[ACC{\etalchar{+}}16]{aragoneses2016}
A.~Aragoneses, L.~Carpi, D.~V. Churkin, C.~Masoller, N.~Tarasov, M.~C. Torrent,
  and S.~K. Turitsyn.
\newblock Unveiling temporal correlations characteristic of a phase transition
  in the output intensity of a fiber laser.
\newblock {\em Physical review letters}, 116(3):033902, 2016.

\bibitem[AGLK{\etalchar{+}}20]{ameer2020terrain}
S.~Ameer, M.~Gibson-Lopez, E.~Krohn, S.~Soderman, and Q.~Wang.
\newblock Terrain visibility graphs: Persistence is not enough, 2020.

\bibitem[ATMP21]{acosta2021applying}
B.~Acosta-Tripailao, P.~S. Moya, and D.~Past{\'e}n.
\newblock Applying the horizontal visibility graph method to study
  irreversibility of electromagnetic turbulence in non-thermal plasmas.
\newblock {\em Entropy}, 23(4):470, 2021.

\bibitem[BLL{\etalchar{+}}08]{lacasa2008time}
F.~Ballesteros, L.~Lacasa, B.~Luque, J.~Luque, and J.C. Nuno.
\newblock From time series to complex networks: The visibility graph.
\newblock {\em Proceedings of the National Academy of Sciences},
  105(13):4972--4975, 2008.

\bibitem[BLLL09]{luque2009horizontal}
F.~Ballesteros, L.~Lacasa, B.~Luque, and J.~Luque.
\newblock Horizontal visibility graphs: Exact results for random time series.
\newblock {\em Physical Review E}, 80(4):046103, 2009.

\bibitem[CDL{\etalchar{+}}15]{lan2015fast}
S.~Chen, Y.~Deng, Q.~Liu, X.~Lan, and H.~Mo.
\newblock Fast transformation from time series to visibility graphs.
\newblock {\em Chaos: An Interdisciplinary Journal of Nonlinear Science},
  25(8):083105, 2015.

\bibitem[DDK13]{physiology_donges2013}
J.~F. Donges, R.~V. Donner, and J.~Kurths.
\newblock Testing time series irreversibility using complex network methods.
\newblock {\em EPL (Europhysics Letters)}, 102(1):10004, 2013.

\bibitem[Die18]{Diestel2018Graphs}
R.~Diestel.
\newblock Graph theory.
\newblock {\em Graduate Texts in Mathematics. Springer, Berlin, fifth edition},
  173, 2018.

\bibitem[FN99]{Flajolet1999Dissections}
P.~Flajolet and M.~Noy.
\newblock Analytic combinatorics of non-crossing configurations.
\newblock {\em Discrete Mathematics, 1999.}, 204:203--229, 1999.

\bibitem[GMS11]{Mansour}
G.~Gutin, T.~Mansour, and S.~Severini.
\newblock A characterization of horizontal visibility graphs and combinatorics
  on words.
\newblock {\em Phys. A}, 390(12):2421--2428, 2011.

\bibitem[GWY20]{gao2020fault}
Y.~Gao, H.~Wang, and D.~Yu.
\newblock Fault diagnosis of rolling bearings using weighted horizontal
  visibility graph and graph fourier transform.
\newblock {\em Measurement}, 149:107036, 2020.

\bibitem[Kos09]{koshy2009catalan}
T.~Koshy.
\newblock {\em Catalan Numbers with Applications}.
\newblock Oxford University Press, 2009.

\bibitem[KS21]{KoehneSchmidt2021}
D.~K\"ohne and J.~Schmidt.
\newblock Online horizontal visibility graphs: A general linear time algorithm.
\newblock {\em In preparation}, 2021.

\bibitem[LL17]{luque2017canonical}
L.~Lacasa and B.~Luque.
\newblock Canonical horizontal visibility graphs are uniquely determined by
  their degree sequence.
\newblock {\em The European Physical Journal Special Topics}, 226(3):383--389,
  2017.

\bibitem[LLM10]{lehman2010mathematics}
E.~Lehman, T.~Leighton, and A.~R. Meyer.
\newblock Mathematics for computer science.
\newblock Technical report, Technical report, 2006. Lecture notes, 2010.

\bibitem[LLN{\etalchar{+}}12]{lacasa2012time}
L.~Lacasa, B.~Luque, A.~Nunez, J.~M.~R. Parrondo, and {\'E}.~Rold{\'a}n.
\newblock Time series irreversibility: a visibility graph approach.
\newblock {\em The European Physical Journal B}, 85(6):1--11, 2012.

\bibitem[LWWZ14]{zhu2014analysis}
Y.~Li, S.~Wang, P.~P. Wen, and G.~Zhu.
\newblock Analysis of alcoholic eeg signals based on horizontal visibility
  graph entropy.
\newblock {\em Brain informatics}, 1(1-4):19--25, 2014.

\bibitem[MPT15]{Manshour_2015}
P.~Manshour, J.~Peinke, and M.~R.~R. Tabar.
\newblock Fully developed turbulence in the view of horizontal visibility
  graphs.
\newblock {\em Journal of Statistical Mechanics: Theory and Experiment},
  2015(8):P08031, aug 2015.

\bibitem[NSS{\etalchar{+}}20]{Yela2020HVGCoding}
V.~Nicosia, M.~Sandler, D.~Stowell, F.~Thalmann, and D.~F. Yela.
\newblock Online visibility graphs: Encoding visibility in a binary search
  tree.
\newblock {\em Phys. Rev. Research 2, 023069}, 2020.

\bibitem[oIS]{OEIS}
The On-Line~Encyclopedia of~Integer~Sequences.
\newblock {\em Published electronically at https://oeis.org, Sequence A007815}.

\bibitem[O'P19]{opella2019horizontal}
J.~O'Pella.
\newblock Horizontal visibility graphs are uniquely determined by their
  directed degree sequence.
\newblock {\em Physica A: Statistical Mechanics and its Applications},
  536:120923, 2019.

\bibitem[RS18]{rong2018topological}
L.~Rong and P.~Shang.
\newblock Topological entropy and geometric entropy and their application to
  the horizontal visibility graph for financial time series.
\newblock {\em Nonlinear Dynamics}, 92(1):41--58, 2018.

\bibitem[Sch70]{schroeder1870Probleme}
E.~Schr\"oder.
\newblock Vier combinatorische probleme.
\newblock {\em Zeitschrift f\"ur Mathematik und Physik. Band 15}, pages
  361--376, 1870.

\bibitem[SS00]{ShapiroSulanke}
L.~W. Shapiro and R.~A. Sulanke.
\newblock Bijections for the {S}chr\"{o}der numbers.
\newblock {\em Math. Mag.}, 73(5):369--376, 2000.

\bibitem[Sta15]{StanleyCatalan}
R.~P. Stanley.
\newblock {\em Catalan numbers}.
\newblock Cambridge University Press, New York, 2015.

\end{thebibliography}

\end{document}